\makeatletter \@addtoreset{equation}{section} \makeatother
\newtheorem{theorem}{Theorem}[section]
\newtheorem{proposition}{Proposition}[section]
\newtheorem{lemma}{Lemma}[section]
\begin{document}
\title{Fractional magnetic Schr\"{o}dinger-Kirchhoff problems
\\with convolution and  critical nonlinearities}
\author{Sihua Liang$^a$, Du\v{s}an D. Repov\v{s}$^{b,c}$, and Binlin Zhang$^{d}$\footnote{Corresponding author.
E-mail address:\, liangsihua@126.com (S. Liang), dusan.repovs@guest.arnes.si (D.~D. Repov\v{s}), zhangbinlin2012@163.com (B. Zhang)}\\
\footnotesize $^a$ College of Mathematics, Changchun Normal
University, Changchun, 130032, P.R. China\\
\footnotesize $^b$ 
University of Ljubljana, Ljubljana, 1000, Slovenia\\ \footnotesize
\footnotesize $^c$ Institute of Mathematics, Physics and Mechanics, Ljubljana, 1000, Slovenia\\
\footnotesize $^d$ College of Mathematics and System Science, Shandong University of Science and Technology,
\\ \footnotesize Qingdao, 266590, P.R. China}
\date{ }
\maketitle

\begin{abstract}
{In this paper we are concerned with the existence and multiplicity of solutions for
the  fractional Choquard-type Schr\"{o}dinger-Kirchhoff equations with electromagnetic fields and
critical nonlinearity:
\begin{eqnarray*}
\begin{cases}
\varepsilon^{2s}M([u]_{s,A}^2)(-\Delta)_{A}^su + V(x)u =
(|x|^{-\alpha}*F(|u|^2))f(|u|^2)u
+ |u|^{2_s^\ast-2}u,\ \ \
x\in  \mathbb{R}^N,\\
u(x) \rightarrow 0,\ \  \quad \mbox{as}\ |x| \rightarrow \infty,
\end{cases}
\end{eqnarray*}
where $(-\Delta)_{A}^s$ is the fractional magnetic
operator with $0<s<1$, $2_s^\ast = 2N/(N-2s)$, $\alpha < \min\{N,
4s\}$, $M : \mathbb{R}^{+}_{0}\rightarrow \mathbb{R}^{+}_0$ is a
continuous function, $A: \mathbb{R}^N \rightarrow \mathbb{R}^N$ is
the magnetic potential, $F(|u|) = \int_0^{|u|}f(t)dt$,
and
 $\varepsilon
> 0$ is a positive parameter. The  electric potential  $V\in C(\mathbb{R}^N, \mathbb{R}^+_0)$ satisfies $V(x) = 0$ in some region of
$\mathbb{R}^N$, which means that
this is
 the critical frequency case. We
first prove the $(PS)_c$ condition, by using the fractional version
of the concentration compactness principle. Then,
applying also
the
mountain pass theorem and the genus theory, we
obtain the existence
and multiplicity of semiclassical states  for the above problem. The
main feature of our problems is that
the Kirchhoff term $M$ can vanish at zero. }\medskip

\emph{\it Keywords:}  Fractional magnetic operator; Choquard-type equation;
Critical nonlinearity; Variational method.\medskip

\emph{\it 2010 Mathematics Subject Classification:} 35J10; 35B99; 35J60; 47G20.
\end{abstract}

\section{Introduction and main results}
In this paper we consider  the
fractional Choquard-Kirchhoff type problem with electromagnetic fields and
critical nonlinearity:
\begin{equation}\label{e1.1}
\left\{
\begin{array}{lll}
\varepsilon^{2s}M([u]_{s,A}^2)(-\Delta)_{A}^su + V(x)u =
(\mathcal{K}_\alpha*F(|u|^2))f(|u|^2)u + |u|^{2_s^\ast-2}u,\
x\in  \mathbb{R}^N,\smallskip\smallskip\\
u(x) \rightarrow 0, \quad \mbox{as}\ |x| \rightarrow \infty,
\end{array}\right.
\end{equation}
where $\varepsilon > 0$ is a positive parameter, $N >2s$, $0 < s <
1$, $2_s^\ast = 2N/(N-2s)$ is the critical Sobolev exponent,
$V\in C(\mathbb{R}^N, \mathbb{R}^+_0)$ is an electric potential,
$\mathcal{K}_\alpha(x) = |x|^{-\alpha}$, $\alpha < \min\{N, 4s\}$,
$A\in C(\mathbb{R}^N, \mathbb{R}^N)$ is a magnetic potential, and
\begin{eqnarray*}
[u]_{s,A}^2 := \iint_{\mathbb{R}^{2N}}\frac{|u(x)-e^{i(x-y)\cdot
A(\frac{x+y}{2})}u(y)|^2}{|x-y|^{N+2s}}dx dy.
\end{eqnarray*}
If $A$ is a smooth function, the fractional operator
$(-\Delta)_{A}^s$, which up to normalization constants can be
defined on smooth functions $u$ as
\begin{eqnarray*}
(-\Delta)_{A}^s u(x) := 2\lim_{\varepsilon \rightarrow 0}
\int_{\mathbb{R}^N \setminus
B_\varepsilon(x)}\frac{u(x)-e^{i(x-y)\cdot
A(\frac{x+y}{2})}u(y)}{|x-y|^{N+2s}}dy, \quad x\in  \mathbb{R}^N,
\end{eqnarray*}
has recently been introduced in \cite{fel}. Hereafter,
$B_\varepsilon(x)$ denotes the ball of $\mathbb{R}^N$ centered at $x
\in \mathbb{R}^N$ and of radius $\varepsilon > 0$. For details on
fractional magnetic operators we refer to \cite{fel}, and for the physical background we refer to
 \cite{I1, I3, I4}.

This paper was motivated by some works
concerning the magnetic Schr\"{o}dinger equation
\begin{equation}\label{101}
-(\nabla u- i A)^2u + V(x)u = f(x, |u|)u,
\end{equation}
which have appeared in recent years  (see \cite{ar, ci, dic, ku, sq})
and have extensively studied \ref{101},
when the above magnetic operator is defined as
\begin{displaymath}
-(\nabla u- i A)^2u = -\Delta u +2i A(x)\cdot\nabla u +
|A(x)|^2u + i u \mbox{div} A(x).
\end{displaymath}
As stated in \cite{sq1}, up to correcting the operator by the factor
$(1-s)$, it follows that $(-\Delta)^s_A u$ converges to $-(\nabla
u-i A)^2u$ as $s\rightarrow1$.

Thus, up to normalization, the
nonlocal case can be seen as an approximation of the local one. The
motivation for its introduction was
 described in \cite{fel,sq1}  and relies essentially on the L\'{e}vy-Khintchine formula
for the generator of a general L\'{e}vy process. If the magnetic
field $A \equiv 0$, the operator $(-\Delta)_{A_\varepsilon}^s$ can
be reduced to the fractional Laplacian operator $(-\Delta)^s$, which
 may be viewed as the
infinitesimal generator of a L$\acute{\mbox{e}}$vy stable diffusion
processes \cite{ap}. This operator arises in the description of
various phenomena in applied sciences, such as phase transitions,
materials science, conservation laws, minimal surfaces, water waves,
optimization, plasma physics, etc., see \cite{di} and references
therein.

The study of
fractional and nonlocal operators of elliptic type  has recently attracted
a lot of
 attention. For the cases in which bounded domains and
the entire space are involved, we refer the readers e.g. to
\cite{ADM, BMS, MRS, m2, YW, ZDM} and the references therein.
When the interaction between the particles is
considered, i.e., when the nonlinear term $f(u)$ is of type $(\mathcal{K}_\alpha*|u|^p)|u|^{p-2}u$,
 this type of problem is usually called
 the Choquard-type equation and has been
investigated by many authors, see e.g. \cite{Lu, WX}.

Another strong motivation  for studying problem \eqref{e1.1}
is the significant feature of Kirchhoff-type problems. More precisely,  in 1883 Kirchhoff proposed the following model
\begin{align}\label{eq3}
\rho\frac{\partial ^2u}{\partial t^2}-\left(\frac{p_0}{\lambda}+\frac{Y}{2L}\int_0^L\left|\frac{\partial u}{\partial x}\right|^2dx\right)\frac{\partial ^2u}{\partial x^2}=0
\end{align}
as a generalization of the well-known D'Alembert's wave equation for free vibrations of elastic strings. Here, $L$ is the length of the string, $\lambda$ is the area of the cross section, $Y$ is the Young modulus of the material, $\rho$ is the mass density, and $p_0$ is the initial tension.
Essentially, Kirchhoff's model takes into account the changes in the length of the string produced by transverse vibrations.
For recent results in this direction, we refer the reader e.g. to \cite{liang2, liang1}.

Recently, Fiscella and  Valdinoci  \cite{fi} first deduced a stationary fractional Kirchhoff
model which considered the nonlocal
aspect of the tension arising from nonlocal measurements of the
fractional length of the string
(see   \cite[Appendix]{fi} for more details).
More precisely,   the following Kirchhoff-type problem involving critical exponent was studied in \cite{fi}:
 \begin{eqnarray}\label{eq13}
\begin{cases}
M([u]^2_s)(-\Delta)^su=\lambda f(x,u)+|u|^{2_s^*-2}u\quad &\mbox{in }\,\,\Omega\\
u=0\quad&\mbox{in}\,\, \mathbb{R}^N\setminus\Omega.
\end{cases}
\end{eqnarray}
where $\Omega$ is an open
bounded domain in $\mathbb{R}^N$. By using the mountain pass theorem and the concentration compactness principle, together with a truncation technique, the existence of non-negative solutions for problem \eqref{eq13} was obtained.

Here we point out that $M(0)>0$ in \eqref{eq13}, this is called the non-degenerate case.
Otherwise, the problem is called degenerate if $M(0)=0$.
In recent years, there has been a lot of interest in studying fractional Kirchhoff-type problems, here we just list some references,
e.g., see \cite{liang3, m2, PXZ} for recent results on the non-degenerate case,
\cite{au, PXZ1, WX, XZR1, XZZ} for recent results on the degenerate case, and  \cite{MPSZ, PXZ3} for  discussions of both cases.

Next, let us mention some enlightening works related to problem \eqref{e1.1}.
Mingqi {\it et al.} \cite{MPSZ} first studied the following Schr$\ddot{\mbox{o}}$dinger-Kirchhoff type equation
involving the fractional $p$-Laplacian and the magnetic operator
\begin{equation}\label{eq11}
M([u]_{s,A}^2)(-\Delta)_A^su+V(x)u=f(x,|u|)u\quad \text{in
$\mathbb{R}^N$},
\end{equation}
where the right-hand term in \eqref{eq11} satisfies the subcritical
growth. By using  variational methods, they obtained several
existence results for problem \eqref{eq11}. Using  similar
methods, for $M(t)=a+bt$ with $a\in \mathbb{R}^+_0$ and $b\in \mathbb{R}^+$,
Wang and Xiang \cite{WX} proved the existence of two
solutions and  infinitely many solutions for fractional
Schr$\ddot{\mbox{o}}$dinger-Choquard-Kirchhoff type equations with
external magnetic operator and critical exponent in the sense of the
Hardy-Littlewood-Sobolev inequality.

 Binlin
{\it et al.}
 \cite{zhang3}
 first considered the following singularly perturbed
fractional Schr$\ddot{\mbox{o}}$dinger equations:
\begin{eqnarray}\label{eq12}
\varepsilon^{2s}(-\Delta)^{s}_{A_{\varepsilon}}u+V(x)u=f(x,|u|)u+K(x)|u|^{2_{\alpha}^{*}-2}u\quad\quad
\mbox{in}\ \mathbb{R}^{N},
\end{eqnarray}
where $V(x)$ satisfies some assumptions. By using variational
methods, they proved the existence of  solutions $u_{\varepsilon}$ which tends to the
trivial solution as $\varepsilon\rightarrow0$. Moreover, they proved
the existence of infinite many solutions and sign-changing solutions
for problem \eqref{eq12} without magnetic field under some additional assumptions.

Subsequently, Liang {\it et al.}
 \cite{liang3} investigated the  existence and multiplicity of solutions for
problem \eqref{e1.1} without Choquard-type term in the non-degenerate Kirchhoff case.
Very recently, by employing variational methods,
 Ambrosio  \cite{A} obtained the existence and concentration of nontrivial solutions for
a singularly perturbed fractional Choquard problem with a subcritical nonlinearity and an external magnetic field.

 Inspired by the above works, in particular
\cite{zhang3, d3, liang3,  MPSZ}, we consider in this article the
existence and multiplicity of solutions for the fractional Choquard-type
problems with electromagnetic fields and  critical nonlinearity in
the possibly degenerate Kirchhoff context. It is worthwhile to remark that in the arguments
developed in \cite{zhang3, d3}, one of the key points is to prove
the  $(PS)_c$ condition. Here we use the fractional version of
Lions' second concentration compactness principle and concentration
compactness principle at infinity to prove that the $(PS)_c$
condition holds, which is different from methods used in
\cite{zhang3, d3}.

In fact, the appearance of the magnetic field
also brings additional difficulties into the study of our problem, e.g., the
effects of the magnetic fields on the linear spectral sets and on
the solution structure, and the possible interactions between the
magnetic fields and the linear potentials. Therefore, we need to
develop new techniques to conquer difficulties induced by these new
features as well as the possibly degenerate nature of the Kirchhoff
coefficient.

\indent Suppose that  functions $V(x)$, $M(t)$ and
$f(t)$ satisfy the following conditions:
\begin{itemize}
\item[($V$)]  $V(x) \in C(\mathbb{R}^N, \mathbb{R})$, $\min_{x\in \mathbb{R}^N} V(x) = 0$ and there is $\tau_0 > 0$ such that the set $V^{\tau_0} = \{x \in \mathbb{R}^N: V(x) < \tau_0\}$
has finite Lebesgue measure.
\item[($M$)] ($M_1$) there exists $\sigma \in (1, 2_s^\ast/2)$ satisfying
$\sigma\widetilde{M}(t)\geq M(t)t$ for all $t\geq0$, where
$\widetilde{M}(t)=\int_0^tM(s)ds$;\\
($M_2$) there exists $m_1 > 0$ such that $M(t) \geq m_1
t^{\sigma-1}$ for all $t \in \mathbb{R}^+$ and $M(0) = 0$.
\item[($F$)]  ($f_1$) $f \in C(\mathbb{R}^+, \mathbb{R})$; \\
($f_2$) there exist $c_0 > 0$ and $\max\{\sigma,2\} < p <2_s^\ast$
such that $|f(t)|
\leq c_0|t|^{\frac{p-1}{2}}$;\\
($f_3$)  there exist $2\sigma<  \mu < 2_s^\ast$ such that $0 < \mu
F(t) \leq f(t)t$ for all $t \in \mathbb{R}^+$, where $F(t) =
\int_0^t f(s)ds$.
\end{itemize}

The following is our first main result, the existence
 theorem for problem \eqref{e1.1}.
\begin{theorem}\label{the3.1} Let the conditions {\rm ($V$), ($M$)} and {\rm ($F$)}  be satisfied. Then  for any $\kappa > 0$, there is $\mathcal {E}_\kappa > 0$ such
that if $0 < \varepsilon <  \mathcal {E}_\kappa$, then problem
\eqref{e1.1} has at least one solution $u_\varepsilon$  satisfying
\begin{eqnarray}\label{e1.8}
\frac{2\mu-\sigma}{4\sigma}\iint_{\mathbb{R}^{2N}}\frac{F(|u_\varepsilon(x)|^2)F(|u_\varepsilon(y)|^2)}{|x-y|^{\alpha}}dxdy
+\left(\frac{1}{2\sigma}-\frac{1}{2_s^\ast}\right)\int_{\mathbb{R}^N}
|u_\varepsilon|^{2_s^\ast}dx \leq
\kappa{\varepsilon^{\frac{2s2_s^\ast}{2_s^\ast-4\sigma}}},
\end{eqnarray}
\begin{eqnarray}\label{e1.9}
\left(\frac{1}{2\sigma}-\frac{1}{\mu}\right)\alpha_0\varepsilon^{2s}
[u_{\varepsilon}]_{s,A}^{2\sigma} +
\left(\frac{1}{2}-\frac{1}{\mu}\right)
\int_{\mathbb{R}^N}V(x)|u_{\varepsilon}|^2dx \leq
\kappa{\varepsilon^{\frac{2s2_s^\ast}{2_s^\ast-4\sigma}}}.
\end{eqnarray}
Moreover, $u_\varepsilon \rightarrow 0$ in $E$ as $\varepsilon
\rightarrow 0$.
\end{theorem}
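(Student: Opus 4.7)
The plan is to apply the mountain pass theorem to the Euler--Lagrange functional associated with \eqref{e1.1}. After setting up the energy space $E$ and the functional
\begin{align*}
I_\varepsilon(u) &= \tfrac{1}{2}\varepsilon^{2s}\widetilde{M}([u]_{s,A}^2) + \tfrac{1}{2}\int_{\mathbb{R}^N} V(x)|u|^2\,dx \\
&\quad - \tfrac{1}{4}\iint_{\mathbb{R}^{2N}} \frac{F(|u(x)|^2)F(|u(y)|^2)}{|x-y|^\alpha}\,dx\,dy - \tfrac{1}{2_s^\ast}\int_{\mathbb{R}^N}|u|^{2_s^\ast}\,dx,
\end{align*}
I would first check the mountain-pass geometry. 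Condition $(M_2)$ supplies a lower bound $\widetilde{M}([u]_{s,A}^2)\geq \frac{m_1}{\sigma}[u]_{s,A}^{2\sigma}$, which together with $(f_2)$, the Hardy--Littlewood--Sobolev inequality, the diamagnetic inequality, and the Sobolev embeddings of $E$ into $L^q$ for $q\in[2,2_s^\ast]$ yields that $0$ is a strict local minimum. Condition $(f_3)$ combined with $\sigma\widetilde{M}(t)\geq M(t)t$ from $(M_1)$ gives $I_\varepsilon(tu_0)\to -\infty$ along a fixed ray, since $2\sigma<\mu$. A standard argument then produces a Palais--Smale sequence at the mountain-pass level $c_\varepsilon$.

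The crucial step is to control $c_\varepsilon$ from above so that the $(PS)_{c_\varepsilon}$ condition, established earlier via the fractional second concentration-compactness principle and its variant at infinity, becomes applicable. Using the extremal family $U_\delta$ for the best fractional Sobolev constant, localized by a cut-off supported where $V$ vanishes and multiplied by the magnetic phase $e^{iA(x_0)\cdot x}$ (standard in the magnetic setting, cf.\ the gauge change used in \cite{zhang3, liang3, MPSZ}), I would take $u_\delta=U_\delta\varphi e^{iA(x_0)\cdot x}$ and estimate $\max_{t\geq 0}I_\varepsilon(tu_\delta)$ by optimizing jointly in $t$ and $\delta=\delta(\varepsilon)$. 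The competition between the leading Kirchhoff term of order $[u]_{s,A}^{2\sigma}$ and the critical term of order $\|u\|_{2_s^\ast}^{2_s^\ast}$ forces the choice of $\delta$ that yields precisely
\[
c_\varepsilon \leq \kappa\,\varepsilon^{\frac{2s\cdot 2_s^\ast}{2_s^\ast-4\sigma}},
\]
which is the threshold appearing in \eqref{e1.8}--\eqref{e1.9}.

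Once a convergent subsequence is extracted by the $(PS)_c$ property, one obtains a critical point $u_\varepsilon$ with $I_\varepsilon(u_\varepsilon)=c_\varepsilon$ and $I_\varepsilon'(u_\varepsilon)=0$. The estimates \eqref{e1.8} and \eqref{e1.9} are then deduced by suitable linear combinations of $I_\varepsilon(u_\varepsilon)$ and $\langle I_\varepsilon'(u_\varepsilon),u_\varepsilon\rangle=0$: testing against $u_\varepsilon$ and using $(M_1)$ to bound the Kirchhoff term by $2\sigma\widetilde{M}$, together with the Ambrosetti--Rabinowitz-type relation $\mu F(t)\leq f(t)t$ from $(f_3)$, eliminates the convolution term in one combination and the critical term in the other, yielding exactly the two displayed inequalities. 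The final claim $u_\varepsilon\to 0$ in $E$ as $\varepsilon\to 0$ follows from $c_\varepsilon\to 0$, the lower bound in $(M_2)$, and the estimates \eqref{e1.8}--\eqref{e1.9}.

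The main obstacle will be the mountain-pass level estimate: in the degenerate Kirchhoff regime with $M(0)=0$ the standard Br\'ezis--Nirenberg test function argument must be modified, because the leading behaviour of the kinetic term is $[u]_{s,A}^{2\sigma}$ rather than $[u]_{s,A}^2$, and one must simultaneously dominate the nonlocal Choquard interaction whose scaling depends on $\alpha$. Picking the right joint scaling $(t,\delta)$ so that $c_\varepsilon$ lies below the critical $(PS)$ threshold (which itself depends on $\sigma$, $N$, $s$, and the best Sobolev constant), while controlling the magnetic-phase error terms via the Kato-type inequality, is the technical heart of the argument.
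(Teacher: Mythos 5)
Your overall skeleton (mountain pass geometry, the concentration--compactness verification of $(PS)_c$ below an $\varepsilon$-dependent threshold, and the derivation of \eqref{e1.8}--\eqref{e1.9} from the two linear combinations $J_\varepsilon(u_\varepsilon)-\tfrac1\tau\langle J_\varepsilon'(u_\varepsilon),u_\varepsilon\rangle$ with $\tau=2\sigma$ and $\tau=\mu$) coincides with the paper's. The gap is in the minimax level estimate, which you correctly identify as the technical heart but then propose to handle with the Aubin--Talenti extremal family $U_\delta$. This cannot work here. The theorem requires $c_\varepsilon\le\kappa\,\varepsilon^{2s2_s^\ast/(2_s^\ast-4\sigma)}$ for \emph{every} $\kappa>0$ (this arbitrariness is exactly what produces \eqref{e1.8}--\eqref{e1.9} with arbitrary $\kappa$ and the conclusion $u_\varepsilon\to0$). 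But for \emph{any} nonzero test function $u$, the diamagnetic and Sobolev inequalities together with $(M_2)$ give
\begin{equation*}
\max_{t\ge0}\Bigl[\tfrac{m_1}{2\sigma}t^{2\sigma}[u]_{s,A}^{2\sigma}-\tfrac{\varepsilon^{-2s}}{2_s^\ast}t^{2_s^\ast}\|u\|_{2_s^\ast}^{2_s^\ast}\Bigr]
\;\ge\; c(m_1,S,\sigma,2_s^\ast)\Bigl(\tfrac{[u]_{s,A}^{2}}{\|u\|_{2_s^\ast}^{2}}\Bigr)^{\frac{\sigma 2_s^\ast}{2_s^\ast-2\sigma}}\varepsilon^{\frac{4s\sigma}{2_s^\ast-2\sigma}},
\end{equation*}
and the bracketed Sobolev quotient is bounded below by $S$, with near-equality precisely for the bubbles. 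So along a ray through $U_\delta$ the Kirchhoff-versus-critical competition is pinned at a \emph{fixed} positive multiple of the $\varepsilon$-power comparable to the $(PS)$ threshold $\sigma_0$ itself, uniformly in $\delta$; no joint choice of $(t,\delta)$ drives it below $\kappa\varepsilon^{(\cdot)}$ for small $\kappa$. The negative Choquard term can push the level \emph{strictly} below the threshold (the classical Br\'ezis--Nirenberg mechanism), but not down to an arbitrarily small multiple of $\varepsilon^{(\cdot)}$, and you give no estimate of the Choquard contribution that would even attempt this.

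The mechanism the paper actually uses is different and is tied to the critical-frequency hypothesis $\min V=0$: by Proposition \ref{pro4.1}, for subcritical $q\in(2,2_s^\ast)$ one can choose $\phi_\zeta\in C_0^\infty$ with $|\phi_\zeta|_q=1$ and Gagliardo seminorm as small as desired (this is exactly where $q<2_s^\ast$ is essential; for $q=2_s^\ast$ the corresponding infimum is $S>0$, which is why bubbles fail). Multiplying by the gauge phase $e^{iA(0)\cdot x}$, supporting $\phi_\zeta$ where $V\le\zeta/|\phi_\zeta|_2^2$, and rescaling $x\mapsto\varepsilon^{-\tau}x$ with $\tau=2s2_s^\ast/(N(2_s^\ast-4\sigma))$ factors out exactly $\varepsilon^{8s\sigma/(2_s^\ast-4\sigma)}$ and leaves a bracket that is $O(\zeta^{\text{positive power}})$; choosing $\zeta$ small in terms of $\kappa$ gives Lemma \ref{lemma4.3}. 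You should replace your bubble construction by this one (or justify quantitatively how the Choquard term alone achieves an arbitrarily small level, which it does not).
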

The following is our second main result, the multiplicity
 theorem for problem \eqref{e1.1}.
\begin{theorem}\label{the3.2} Let the conditions {\rm ($V$), ($M$)} and {\rm ($F$)}  be satisfied. Then for any $m \in \mathbb{N}$ and $\kappa > 0$, there
is $\mathcal {E}_{m\kappa}
> 0$ such that if $0 < \varepsilon < \mathcal {E}_{m\kappa}$, then problem \eqref{e1.1} has at least $m$ pairs of
solutions $u_{\varepsilon,i}$, $u_{\varepsilon,-i}$,
$i=1,2,\cdots,m$ which satisfy estimates \eqref{e1.8} and
\eqref{e1.9}. Moreover, $u_{\varepsilon,i}\rightarrow 0$ in $E$ as
$\varepsilon \rightarrow 0$, $i=1,2,\cdots,m$.
\end{theorem}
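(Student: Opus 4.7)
The plan is to upgrade the single critical point produced by Theorem \ref{the3.1} to an $m$-parameter family of critical pairs via a $\mathbb{Z}_2$-equivariant min--max argument based on the Krasnoselskii genus. The energy functional $I_\varepsilon$ associated with \eqref{e1.1} depends on $u$ only through $|u|$, $[u]_{s,A}^2$ and $\int_{\mathbb{R}^N} V|u|^2\,dx$, so it is even, which supplies the symmetry required by the genus theory. From the proof of Theorem \ref{the3.1} we already have the mountain pass geometry near the origin and the $(PS)_c$ condition below a threshold $c^\ast$ determined by the fractional concentration--compactness analysis; both facts will be taken as given here.

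The first step is to fix $m\in\mathbb{N}$ and build, for every small $\varepsilon$, an $m$-dimensional subspace $E_m^\varepsilon\subset E$ on which $I_\varepsilon$ is nonpositive on some sphere of radius $R_m>0$. Since $\min V=0$, assumption $(V)$ produces points where $V$ is small; by continuity one can place $m$ bump functions $\varphi_1,\dots,\varphi_m\in C_c^\infty(\mathbb{R}^N,\mathbb{C})$ with mutually disjoint supports inside a neighborhood of such a point. Rescaling these by a factor depending on $\varepsilon$ makes the Kirchhoff contribution $\varepsilon^{2s}\widetilde{M}([u]_{s,A}^2)$ and the potential term $\int V|u|^2\,dx$ arbitrarily small thanks to $(M_1)$--$(M_2)$ together with $\sigma<2_s^\ast/2$, while $(F)$ and $2\sigma<\mu<2_s^\ast$ ensure that the Choquard and critical terms keep $I_\varepsilon$ negative on $\partial B_{R_m}\cap E_m^\varepsilon$. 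Combined with the positive mountain-pass geometry at zero, this produces the symmetric min--max levels
\begin{equation*}
c_{\varepsilon,i}=\inf_{A\in\Sigma_i}\sup_{u\in A}I_\varepsilon(u),\qquad i=1,\dots,m,
\end{equation*}
where $\Sigma_i$ is the class of closed symmetric subsets of a suitable invariant neighborhood of zero (with $0$ removed) whose Krasnoselskii genus satisfies $\gamma(A)\ge i$.

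The main technical obstacle is to certify that \emph{all} $m$ levels lie strictly below $c^\ast$ simultaneously. A careful book-keeping on the finite-dimensional space $E_m^\varepsilon$, using the rescaling of the $\varphi_j$, yields
\begin{equation*}
0<c_{\varepsilon,1}\le c_{\varepsilon,2}\le\cdots\le c_{\varepsilon,m}\le C_m\,\varepsilon^{\frac{2s\,2_s^\ast}{2_s^\ast-4\sigma}},
\end{equation*}
so prescribing $\mathcal{E}_{m\kappa}$ small enough forces $c_{\varepsilon,m}<c^\ast$ whenever $0<\varepsilon<\mathcal{E}_{m\kappa}$. The standard deformation lemma together with the symmetric minimax principle in genus theory then produces $m$ pairs of critical points $\pm u_{\varepsilon,i}$ with $I_\varepsilon(u_{\varepsilon,\pm i})=c_{\varepsilon,i}\in(0,c^\ast)$.

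Finally, the bounds \eqref{e1.8} and \eqref{e1.9} are derived for each $u_{\varepsilon,i}$ exactly as in Theorem \ref{the3.1}: the identity $\langle I_\varepsilon'(u_{\varepsilon,i}),u_{\varepsilon,i}\rangle=0$ together with $(M_1)$ and $(f_3)$ yields a nonnegative convex combination of the Choquard and critical terms on the left, while $I_\varepsilon(u_{\varepsilon,i})\le C_m\,\varepsilon^{2s\cdot 2_s^\ast/(2_s^\ast-4\sigma)}$ bounds the right-hand side. After possibly shrinking $\mathcal{E}_{m\kappa}$ one absorbs $C_m$ into the prescribed $\kappa$. In particular, \eqref{e1.9} combined with $(M_2)$ gives $\varepsilon^{2s}[u_{\varepsilon,i}]_{s,A}^{2\sigma}\to 0$ and $\int_{\mathbb{R}^N} V|u_{\varepsilon,i}|^2\,dx\to 0$, from which $u_{\varepsilon,i}\to 0$ in $E$ as $\varepsilon\to 0$ for each $i=1,\dots,m$.
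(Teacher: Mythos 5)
Your proposal follows essentially the same route as the paper: a finite-dimensional subspace spanned by $m$ disjointly supported, rescaled bump functions on which the energy is at most $C_m\,\varepsilon^{2s2_s^\ast/(2_s^\ast-4\sigma)}$, a $\mathbb{Z}_2$-symmetric minimax over sets of genus at least $i$, the $(PS)_c$ condition below the concentration--compactness threshold to certify all $m$ levels as critical, and the same identity $J_\varepsilon(u)-\tau^{-1}\langle J_\varepsilon'(u),u\rangle$ to extract \eqref{e1.8}--\eqref{e1.9} and the convergence $u_{\varepsilon,i}\to 0$. The only cosmetic difference is that the paper phrases the minimax via Benci's pseudoindex $j(Z)=\min_{\eta}\mathrm{gen}(\eta(Z)\cap\partial B_{\varrho_\varepsilon})$, which automatically yields the lower bound $c_{\varepsilon,1}\ge\alpha_\varepsilon>0$ that your plain-genus formulation secures by restricting to a punctured invariant neighborhood of the origin.
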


\section{Functional setting}\label{sec2}

\indent In this paper, we shall use Banach space $E$ defined by
\begin{displaymath}
E =\left\{u\in H_{A}^{s}(\mathbb{R}^N,\mathbb{C}):
\int_{\mathbb{R}^N}V(x)|u|^{2}dx<\infty \right\}
\end{displaymath}
with the norm
\begin{displaymath}
\|u\|_E := \left([u]_{s,A}^2 +
\int_{\mathbb{R}^N}V(x)|u|^2dx\right)^{\frac{1}{2}},
\end{displaymath}
where $V$ is non-negative,
$H_{A_\varepsilon}^{s}(\mathbb{R}^N,\mathbb{C})$ is the fractional
Sobolev space defined by
\begin{displaymath}
H_{A}^{s}(\mathbb{R}^N,\mathbb{C})  =  \left\{u \in
L^2(\mathbb{R}^N,\mathbb{C}): [u]_{s,A} < \infty\right\},
\end{displaymath}
where  $s \in (0, 1)$ and $[u]_{s,A}$ denotes the so-called
Gagliardo semi-norm, that is
\begin{displaymath}
[u]_{s,A}  = \left(\iint_{\mathbb{R}^{2N}}\frac{|u(x)-e^{i(x-y)\cdot
A(\frac{x+y}{2})}u(y)|^2}{|x-y|^{N+2s}}dx dy\right)^{1/2}
\end{displaymath}
and $H_{A}^{s}(\mathbb{R}^N,\mathbb{C})$ is endowed with the norm
\begin{displaymath}
\|u\|_{H_{A}^{s}(\mathbb{R}^N,\mathbb{C})} = \left([u]_{s,A}^2 +
\|u\|_{L^2}^2\right)^{\frac{1}{2}}.
\end{displaymath}

By assumption $(V)$, we know that the embedding $E
\hookrightarrow H_{A}^{s}(\mathbb{R}^N,\mathbb{C})$ is continuous.
Note that the norm $\|\cdot\|_E$ is equivalent to the norm
$\|\cdot\|_\varepsilon$ defined by
\begin{displaymath}
\|u\|_\varepsilon := \left([u]_{s,A}^2 +
\varepsilon^{-2s}\int_{\mathbb{R}^N}V(x)|u|^2dx\right)^{\frac{1}{2}},
\end{displaymath}
for each $\varepsilon > 0$.  It is obvious that for each $ \theta
\in [2, 2_s^\ast]$, there is $c_{\theta}>0$, independent of $0 <
\varepsilon < 1$, such that
\begin{equation}\label{e2.2}
|u|_{\theta} \leq c_{\theta}\|u\|_E \leq
c_{\theta}\|u\|_{\varepsilon}.
\end{equation}
 Hereafter, we shortly denote by $\|\cdot\|_\nu$ the norm of Lebesgue space $L^\nu(\Omega)$ with $\nu\geq1$.

\indent We first recall the following embedding theorem:
\begin{proposition}\label{pro1}{\em (see \cite[Lemma 3.5]{fel})}. Let $A\in C(\mathbb{R}^N, \mathbb{R}^N)$. Then the embedding
\begin{displaymath}
H_{A}^{s}(\mathbb{R}^N,\mathbb{C}) \hookrightarrow
L^{\theta}(\mathbb{R}^N,\mathbb{C}),
\end{displaymath}
 is continuous for any $\theta \in [2,2_s^\ast]$. Moreover, the
 embedding
\begin{displaymath}
H_{A}^{s}(\mathbb{R}^N,\mathbb{C}) \hookrightarrow
L^{\theta}_{{\rm loc}}(\mathbb{R}^N,\mathbb{C})
\end{displaymath}
is compact for any $\theta \in [1,2_s^\ast)$.
\end{proposition}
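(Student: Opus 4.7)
The plan is to reduce the magnetic embedding to the classical (non-magnetic) fractional Sobolev embedding via the diamagnetic inequality. First I would establish the pointwise diamagnetic inequality
\begin{equation*}
\bigl||u(x)|-|u(y)|\bigr| \;\le\; \bigl|u(x)-e^{i(x-y)\cdot A(\frac{x+y}{2})}u(y)\bigr|
\quad\text{for a.e.\ } x,y\in\mathbb{R}^N,
\end{equation*}
which follows from the triangle inequality together with the fact that $|e^{i\theta}z|=|z|$ for any real $\theta$ and $z\in\mathbb{C}$. Squaring, dividing by $|x-y|^{N+2s}$, and integrating immediately yields $[|u|]_s \le [u]_{s,A}$, so the map $u\mapsto |u|$ sends $H_A^s(\mathbb{R}^N,\mathbb{C})$ continuously into the usual real fractional Sobolev space $H^s(\mathbb{R}^N,\mathbb{R})$, with
\begin{equation*}
\|\,|u|\,\|_{H^s(\mathbb{R}^N)} \le \|u\|_{H_A^s(\mathbb{R}^N,\mathbb{C})}.
\end{equation*}

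Next I would invoke the classical continuous embedding $H^s(\mathbb{R}^N,\mathbb{R}) \hookrightarrow L^{\theta}(\mathbb{R}^N,\mathbb{R})$ for $\theta\in[2,2_s^{\ast}]$ (proved via the Gagliardo--Nirenberg--Sobolev inequality for fractional Sobolev spaces and interpolation between $L^2$ and $L^{2_s^\ast}$). Since $\||u|\|_{L^\theta}=\|u\|_{L^\theta}$, combining this with the diamagnetic step gives the continuous embedding
\begin{equation*}
\|u\|_{L^\theta(\mathbb{R}^N,\mathbb{C})} \le C\,\|u\|_{H_A^s(\mathbb{R}^N,\mathbb{C})},
\end{equation*}
which is the first part of the statement.

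For the local compactness, I would take a bounded sequence $\{u_n\}\subset H_A^s(\mathbb{R}^N,\mathbb{C})$ and a bounded domain $\Omega\subset\mathbb{R}^N$. By the diamagnetic inequality, $\{|u_n|\}$ is bounded in $H^s(\mathbb{R}^N,\mathbb{R})$, so the classical Rellich--Kondrachov theorem for the fractional Sobolev space gives, along a subsequence, $|u_n|\to v$ strongly in $L^\theta(\Omega)$ for any $\theta\in[1,2_s^\ast)$. To upgrade this to convergence of the complex-valued sequence itself, I would combine the $L^2$-boundedness of $\{u_n\}$ with an equi-continuity/tightness argument: writing $u_n=|u_n|e^{i\varphi_n}$ after extracting a pointwise a.e.\ convergent subsequence (possible by a Fréchet--Kolmogorov type criterion adapted to the magnetic seminorm, exploiting the bound on $[u_n]_{s,A}$ to control translations of $u_n$ in $L^2(\Omega)$), one obtains $u_n\to u$ in $L^\theta_{\mathrm{loc}}(\mathbb{R}^N,\mathbb{C})$ for $\theta\in[1,2_s^\ast)$.

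The main obstacle is this last step: the diamagnetic inequality only controls $|u_n|$, but compactness must be proved for the complex-valued $u_n$ themselves. I expect this to require a delicate translation-continuity estimate of the form
\begin{equation*}
\int_{\Omega}|u_n(x+h)-u_n(x)|^2\,dx \;\longrightarrow\; 0 \quad\text{as } |h|\to 0,
\end{equation*}
uniformly in $n$, derived by splitting $u_n(x+h)-u_n(x)$ using the magnetic phase $e^{ih\cdot A(\cdot)}$ and estimating the resulting pieces by the magnetic Gagliardo seminorm and by the continuity of $A$ (which is where the hypothesis $A\in C(\mathbb{R}^N,\mathbb{R}^N)$ enters). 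Once the Fréchet--Kolmogorov criterion is verified on $\Omega$, compactness in $L^2_{\mathrm{loc}}$ follows, and interpolation with the continuous embedding into $L^{2_s^\ast}$ extends compactness to the full range $\theta\in[1,2_s^\ast)$.
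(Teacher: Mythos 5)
The paper does not actually prove this proposition --- it is quoted from \cite[Lemma 3.5]{fel} --- so the relevant comparison is with the standard proof in that reference. Your first half (the pointwise diamagnetic inequality $\bigl||u(x)|-|u(y)|\bigr|\le\bigl|u(x)-e^{i(x-y)\cdot A(\frac{x+y}{2})}u(y)\bigr|$ via the reverse triangle inequality, hence $[|u|]_s\le[u]_{s,A}$, followed by the classical fractional Sobolev embedding applied to $|u|$ and $\||u|\|_{L^\theta}=\|u\|_{L^\theta}$) is correct and is exactly the argument used there.

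The compactness half, however, has a genuine gap. You correctly identify the obstacle --- the diamagnetic inequality controls only $|u_n|$, while compactness is needed for the complex-valued $u_n$ --- but the key translation-continuity estimate is only announced (``I expect this to require\dots''), not derived, and the polar-decomposition detour $u_n=|u_n|e^{i\varphi_n}$ cannot rescue it: the phase is undefined where $u_n$ vanishes, and a.e.\ convergence of $|u_n|$ gives no control on $u_n$ itself (consider $u_n=(-1)^n$ on a set of positive measure). The clean way to close the gap, and the one used in \cite{fel}, is a local comparison of seminorms rather than a Fr\'echet--Kolmogorov argument: for $x,y$ in a bounded set $\Omega$ write
\begin{equation*}
|u(x)-u(y)|\le\Bigl|u(x)-e^{i(x-y)\cdot A(\frac{x+y}{2})}u(y)\Bigr|+\Bigl|e^{i(x-y)\cdot A(\frac{x+y}{2})}-1\Bigr|\,|u(y)|,
\end{equation*}
and use $|e^{i\theta}-1|\le|\theta|$ together with the local boundedness of the continuous $A$ to get $\bigl|e^{i(x-y)\cdot A(\frac{x+y}{2})}-1\bigr|\le C_\Omega|x-y|$. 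After squaring and integrating, the second term contributes $\iint_{\Omega\times\Omega}|u(y)|^2|x-y|^{-(N+2s-2)}\,dx\,dy\le C\|u\|_{L^2}^2$, which is finite precisely because $N+2s-2<N$ (this is where $s<1$ enters). Hence $[u]_{H^s(\Omega)}\le C(\Omega,A)\,\|u\|_{H^s_A(\mathbb{R}^N)}$, and the classical Rellich--Kondrachov theorem for $H^s(\Omega,\mathbb{C})$ on bounded Lipschitz domains gives the compact embedding into $L^\theta(\Omega)$ for $\theta\in[1,2_s^\ast)$ directly for the complex-valued functions, with no phase decomposition needed.
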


\indent We shall use the following diamagnetic inequality:
\begin{lemma}\label{lemma2.1}{\em (see \cite[Lemma 3.3]{fel})}.  For every $u \in
H_{A}^{s}(\mathbb{R}^N,\mathbb{C})$,
$$|u| \in
H^{s}(\mathbb{R}^N).$$ More precisely,
\begin{displaymath}
[|u|]_{s} \leq [u]_{s,A}.
\end{displaymath}
\end{lemma}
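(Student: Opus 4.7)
The plan is to reduce the whole assertion to the pointwise inequality
\[
\bigl||u(x)|-|u(y)|\bigr| \le \bigl|u(x)-e^{i(x-y)\cdot A(\frac{x+y}{2})}u(y)\bigr| \quad \text{for a.e. } x,y\in\mathbb{R}^N,
\]
which is the fractional analogue of the classical diamagnetic inequality $|\nabla|u||\le|(\nabla-iA)u|$. Once this pointwise bound is in hand, integrating both sides against the singular kernel $|x-y|^{-(N+2s)}$ immediately yields $[|u|]_s\le[u]_{s,A}$, and the $L^2$-part is automatic because $\||u|\|_{L^2(\mathbb{R}^N)}=\|u\|_{L^2(\mathbb{R}^N,\mathbb{C})}<\infty$, so $|u|\in H^s(\mathbb{R}^N)$.

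First I would prove the pointwise inequality. The key observation is that $e^{i(x-y)\cdot A(\frac{x+y}{2})}$ is a complex number of modulus one, so $|e^{i(x-y)\cdot A(\frac{x+y}{2})}u(y)|=|u(y)|$. Applying the reverse triangle inequality in $\mathbb{C}$,
\[
\bigl||u(x)|-|u(y)|\bigr|=\bigl||u(x)|-|e^{i(x-y)\cdot A(\frac{x+y}{2})}u(y)|\bigr|\le\bigl|u(x)-e^{i(x-y)\cdot A(\frac{x+y}{2})}u(y)\bigr|,
\]
which is the desired pointwise estimate valid for all $x\ne y$.

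Next I would square, divide by $|x-y|^{N+2s}$, and integrate over $\mathbb{R}^{2N}$:
\[
\iint_{\mathbb{R}^{2N}}\frac{||u(x)|-|u(y)||^2}{|x-y|^{N+2s}}\,dx\,dy\le\iint_{\mathbb{R}^{2N}}\frac{|u(x)-e^{i(x-y)\cdot A(\frac{x+y}{2})}u(y)|^2}{|x-y|^{N+2s}}\,dx\,dy.
\]
The left-hand side is $[|u|]_s^2$ and the right-hand side is $[u]_{s,A}^2<\infty$ by assumption, so $[|u|]_s\le[u]_{s,A}$. Combined with $|u|\in L^2(\mathbb{R}^N)$, this shows $|u|\in H^s(\mathbb{R}^N)$.

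There is essentially no serious obstacle here: the only subtle step is the correct use of the reverse triangle inequality together with the unimodularity of the phase factor, and measurability of $|u|$ is inherited from $u$ so Fubini--Tonelli on the non-negative integrand causes no trouble. The result can also be found in \cite{fel}; the argument above is the standard one.
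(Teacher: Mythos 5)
Your argument is correct and coincides with the standard proof of the fractional diamagnetic inequality given in the cited reference \cite{fel}: the unimodularity of the phase factor plus the reverse triangle inequality gives the pointwise bound, and integration against the kernel finishes it. The paper itself does not reproduce a proof but merely cites \cite[Lemma 3.3]{fel}, so there is nothing to compare beyond noting that your route is the intended one.
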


\indent By Proposition 3.6 in \cite{di}, we have
\begin{displaymath}
[u]_{s} = \|(-\Delta)^{\frac{s}{2}}\|_{L^2(\mathbb{R}^N)}
\end{displaymath}
for any $u \in H^{s}(\mathbb{R}^N)$, i.e.
\begin{displaymath}
\iint_{\mathbb{R}^{2N}}\frac{|u(x)-u(y)|^2}{|x-y|^{N+2s}}dxdy  =
\int_{\mathbb{R}^{N}}|(-\Delta)^{\frac{s}{2}}u(x)|^2dx.
\end{displaymath}
Thus
\begin{displaymath}
\iint_{\mathbb{R}^{2N}}\frac{(u(x)-u(y))(v(x)-v(y))}{|x-y|^{N+2s}}dx
dy  =
\int_{\mathbb{R}^{N}}(-\Delta)^{\frac{s}{2}}u(x)\cdot(-\Delta)^{\frac{s}{2}}v(x)
dx.
\end{displaymath}

To obtain the solution of problem \eqref{e1.1}, we shall use the
following equivalent form
\begin{equation}\label{e3.1}
\left\{
\begin{array}{lll}M\Big([u]_{s,A}^2\Big)(-\Delta)_{A}^su + \varepsilon^{-2s} V(x)u =
 \varepsilon^{-2s}\displaystyle\int_{\mathbb{R}^N}\frac{F(|u|^2)}{|x-y|^{\alpha}}dyf(|u|^2)u + \varepsilon^{-2s}|u|^{2_s^\ast-2}u, \,
x\in  \mathbb{R}^N,\smallskip\smallskip\\
u(x) \rightarrow 0, \ \indent as \ \ |x| \rightarrow \infty,
\end{array}\right.
\end{equation}
for $\varepsilon \rightarrow 0$. \\
\indent The energy functional $J_\varepsilon: E \rightarrow
\mathbb{R}$ associated with problem  \eqref{e3.1}
\begin{eqnarray*}
J_\varepsilon(u) :=\frac12 \widetilde{M}\left([u]_{s,A}^2\right)+
\frac{\varepsilon^{-2s}}{2}\int_{\mathbb{R}^N}V(x)|u|^2dx-
\frac{\varepsilon^{-2s}}{4}\iint_{\mathbb{R}^{2N}}\frac{F(|u(x)|^2)F(|u(y)|^2)}{|x-y|^{\alpha}}dxdy
- \frac{\varepsilon^{-2s}}{2_s^\ast}\int_{\mathbb{R}^N}
|u|^{2_s^\ast}dx
\end{eqnarray*}
is well defined. Under the assumptions, it is easy to check that as
shown in \cite{r1,w1}, $J_\varepsilon \in C^1 (E, \mathbb{R})$ and
its critical points are weak solutions of problem  \eqref{e3.1}.
\\
 \indent By condition $(f_2)$,  we have
\begin{eqnarray*}
F(|u|^2) \leq C(|u|^2 + |u|^p), \quad  \mbox{for all}  \ \ u \in
H_A^s(\mathbb{R}^N, \mathbb{C}).
\end{eqnarray*}
Note that, by the Hardy-Littlewood-Sobolev inequality, the integral
\begin{eqnarray*}
\iint_{\mathbb{R}^{2N}}\frac{F(|u(x)|^2)F(|u(y)|^2)}{|x-y|^{\alpha}}dxdy
\end{eqnarray*}
is well defined if $F(|u|^2) \in L^r(\mathbb{R}^N)$ for some $r > 1$
satisfying
\begin{eqnarray*}
\frac{2}{r} + \frac{\alpha}{N} = 2,
\end{eqnarray*}
that is $r = 2N/(2N - \alpha)$. Actually, by $\alpha <
\min\{N, 4s\}$, it follows that $2 < 2r < 2_s^\ast$. Moreover, from
$2 < pr < 2_s^\ast$, we
can
 deduce
\begin{eqnarray}
\int_{\mathbb{R}^N}|F(|u|^2)|^rdx  &\leq&
2^{r-1}C^r\left(\int_{\mathbb{R}^N}|u|^{2r}dx +
\int_{\mathbb{R}^N}|u|^{pr}dx\right) \\
&\leq& 2^{r-1}C^r\left(C_{2r}^{2r}\|u\|^{2r} +
C_{pr}^{pr}\|u\|^{pr}\right)  \quad \mbox{for all} \  \ u \in
H_A^s(\mathbb{R}^N, \mathbb{C}).
\end{eqnarray}
By a standard
argument, one can show that $J_\varepsilon(u)$ is of
class $C^1$ and
\begin{eqnarray*}
\langle J_\varepsilon'(u), v\rangle &=&
M\left([u]_{s,A}^2\right)\mbox{Re}\iint_{\mathbb{R}^{2N}}\frac{(u(x)-e^{i(x-y)\cdot
A(\frac{x+y}{2})}u(y))\overline{(v(x)-e^{i(x-y)\cdot
A(\frac{x+y}{2})}v(y))}}{|x-y|^{N+2s}}dx dy  \\
&& \mbox{}\ + \varepsilon^{-2s}\mbox{Re}
\int_{\mathbb{R}^N}V(x)u\bar{v}dx - \varepsilon^{-2s} \mbox{Re}
\int_{\mathbb{R}^N}(\mathcal{K}_\mu*F(|u|^2))f(|u|^2)u\bar{v}dx -
\varepsilon^{-2s} \mbox{Re}
\int_{\mathbb{R}^N}|u|^{2_s^\ast-2}u\bar{v}dx,
\end{eqnarray*}
for all $u, v \in E$. Hence a critical point of $J_\varepsilon$ is a weak solution of problem \eqref{e1.1}.\\
\indent  Now we recall the general version of the mountain pass
theorem in  \cite{r1}
 which will be used later.
\begin{theorem}\label{the5.1} Let $\mathcal{J}$ be a functional on a Banach
space $Y$ and $\mathcal{J} \in C^1(Y, \mathbb{R})$. Let us assume that there
exist $\zeta, \rho > 0$ such that
\begin{itemize}
\item[$(i)$] $\mathcal{J}(u) \geq \zeta$, for every $\ u \in Y$ with $\|u\| =
\rho$;
\item[$(ii)$]  $\mathcal{J}(0) = 0$ and $\mathcal{J}(e) < \zeta$ for some $e \in Y$
with $\|e\| > \rho$.
\end{itemize}
Let us define $\Gamma = \{\gamma \in C([0, 1]; Y): \gamma(0) = 0,
\gamma(1) =e\}$ and
\begin{displaymath}
c = \inf\limits_{\gamma \in \Gamma }\max\limits_{t \in [0,1]}
\mathcal{J}(\gamma(t)).
\end{displaymath}
Then there exists a sequence $\{u_n\}_n \subset Y$ such that $\mathcal{J}(u_n)
\rightarrow c$ and $\mathcal{J}'(u_n) \rightarrow 0$ in $Y'$ (dual of $Y$).
\end{theorem}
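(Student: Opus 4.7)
The plan is to prove this minimax principle by contradiction via a deformation argument, using only the $C^1$ regularity of $\mathcal{J}$ (no Palais--Smale condition is assumed, since the conclusion only produces a Palais--Smale sequence, not a critical point). Suppose no sequence $\{u_n\}\subset Y$ satisfies $\mathcal{J}(u_n)\to c$ and $\mathcal{J}'(u_n)\to 0$. A standard covering argument then yields $\eta,\delta>0$ such that
\[
\|\mathcal{J}'(u)\|_{Y'} \geq \delta \qquad \text{whenever } |\mathcal{J}(u)-c|\leq 2\eta.
\]
Shrinking $\eta$ if necessary, I would further arrange that both $\mathcal{J}(0)=0$ and $\mathcal{J}(e)$ lie outside the band $[c-2\eta,c+2\eta]$, using $c\geq \zeta>0$ from hypothesis $(i)$ and $\mathcal{J}(e)<\zeta\leq c$ from hypothesis $(ii)$.

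Next, I would invoke the classical Palais pseudo-gradient lemma: on $\widetilde{Y}:=\{u\in Y : \mathcal{J}'(u)\neq 0\}$ there exists a locally Lipschitz field $V:\widetilde{Y}\to Y$ with $\|V(u)\|\leq 2\|\mathcal{J}'(u)\|_{Y'}$ and $\langle \mathcal{J}'(u),V(u)\rangle \geq \|\mathcal{J}'(u)\|_{Y'}^2$. Let $g:Y\to[0,1]$ be a locally Lipschitz cutoff equal to $1$ on $\{|\mathcal{J}-c|\leq \eta\}$ and vanishing outside $\{|\mathcal{J}-c|\leq 2\eta\}$. The autonomous ODE
\[
\tfrac{d}{dt}\sigma(t,u)=-g(\sigma(t,u))\,\frac{V(\sigma(t,u))}{\|V(\sigma(t,u))\|}, \qquad \sigma(0,u)=u,
\]
then generates a continuous flow defined for all $t\geq 0$ (its speed is bounded by $1$), leaves $\mathcal{J}$ non-increasing along orbits, and decreases $\mathcal{J}$ at rate at least $\delta/2$ whenever the orbit lies in $\{|\mathcal{J}-c|\leq \eta\}$.

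The contradiction is now immediate. Using the definition of $c$, pick $\gamma\in\Gamma$ with $\max_{t\in[0,1]}\mathcal{J}(\gamma(t))\leq c+\eta/2$ and set $\widetilde{\gamma}(t):=\sigma(T,\gamma(t))$ with $T:=4\eta/\delta$. Since $\gamma(0)=0$ and $\gamma(1)=e$ lie outside the critical band, $g$ vanishes near them and the flow fixes them, so $\widetilde{\gamma}\in\Gamma$. On the other hand, any point with $\mathcal{J}\leq c+\eta/2$ satisfies $\mathcal{J}\leq c-\eta/2$ after time $T$. Hence $\max_t \mathcal{J}(\widetilde{\gamma}(t))\leq c-\eta/2<c$, contradicting $c=\inf_{\gamma\in\Gamma}\max_t\mathcal{J}(\gamma(t))$.

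The main obstacle---and essentially the only nontrivial ingredient---is the pseudo-gradient construction on an arbitrary Banach space $Y$: because $\mathcal{J}'(u)\in Y'$ has no canonical realization in $Y$, one must patch together locally near-optimal directions via a locally finite partition of unity on the paracompact metric space $\widetilde{Y}$. Once that lemma is granted, everything else is routine ODE bookkeeping. The secondary subtlety is the geometric role of hypothesis $(i)$: the strict positivity $c\geq\zeta>0$ is exactly what prevents the deformation band from swallowing the endpoints of admissible paths, so that $\widetilde{\gamma}$ remains in $\Gamma$; without it, the class $\Gamma$ would not be stable under the flow and the entire argument would break down.
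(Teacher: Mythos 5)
Your argument is correct: it is the classical quantitative deformation / pseudo-gradient proof of the mountain pass theorem without the Palais--Smale condition, which is exactly the proof in the source the paper cites --- the paper itself states this result without proof, quoting it from Rabinowitz \cite{r1}. The only cosmetic remark is that the initial bound $\|\mathcal{J}'(u)\|_{Y'}\geq\delta$ on the band $|\mathcal{J}(u)-c|\leq 2\eta$ needs no covering argument; it is the direct quantifier negation of the existence of a Palais--Smale sequence at level $c$.
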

By the assumptions $(V)$, $(M)$ and $(F)$, one can see that
$J_\varepsilon(u)$ has the mountain pass geometry.
\begin{lemma}\label{lemma4.1} Assume that conditions {\rm ($V$)}, $(M)$ and  {\rm ($F$)}  hold.
Then the functional $J_\varepsilon$ satisfies the conclusions
(i)-(ii) of Theorem~\ref{the5.1}.\end{lemma}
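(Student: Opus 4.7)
The plan is to verify conditions $(i)$ and $(ii)$ of Theorem~\ref{the5.1} for $J_\varepsilon$ directly, relying on the Hardy--Littlewood--Sobolev-based estimate for the Choquard term that is already recorded just before Theorem~\ref{the5.1}, and on the embedding given by Proposition~\ref{pro1}.

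For $(i)$, I would first use $(M_2)$ to obtain $\widetilde{M}([u]_{s,A}^2)\ge (m_1/\sigma)[u]_{s,A}^{2\sigma}$. Then HLS together with the bound $\|F(|u|^2)\|_r^2\le C(\|u\|_\varepsilon^{4}+\|u\|_\varepsilon^{2p})$ (which follows from the displayed estimate on $\int|F(|u|^2)|^r\,dx$ combined with \eqref{e2.2}) controls the Choquard piece by $C\varepsilon^{-2s}(\|u\|_\varepsilon^{4}+\|u\|_\varepsilon^{2p})$, while Proposition~\ref{pro1} yields $\varepsilon^{-2s}\int|u|^{2_s^\ast}dx\le C\varepsilon^{-2s}\|u\|_\varepsilon^{2_s^\ast}$. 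The main subtlety is the mismatch between the $2\sigma$-homogeneity of the Kirchhoff contribution and the $2$-homogeneity of the potential contribution. To reconcile them on the ball $\|u\|_\varepsilon\le 1$, I would note that since $\sigma>1$,
$$
\Bigl(\varepsilon^{-2s}\int_{\mathbb{R}^N}V(x)|u|^2\,dx\Bigr)^{\!\sigma}\le \varepsilon^{-2s}\int_{\mathbb{R}^N}V(x)|u|^2\,dx,
$$
and then apply the elementary inequality $a^\sigma+b^\sigma\ge 2^{1-\sigma}(a+b)^\sigma$ to obtain $[u]_{s,A}^{2\sigma}+\varepsilon^{-2s}\int V|u|^2\,dx \ge 2^{1-\sigma}\|u\|_\varepsilon^{2\sigma}$. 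This yields an estimate of the form $J_\varepsilon(u)\ge C_1\|u\|_\varepsilon^{2\sigma}-C_2\varepsilon^{-2s}(\|u\|_\varepsilon^{4}+\|u\|_\varepsilon^{2p}+\|u\|_\varepsilon^{2_s^\ast})$, and since $2\sigma<\min\{4,2p,2_s^\ast\}$ by $(M_1)$ and $(f_2)$, choosing $\rho=\rho_\varepsilon>0$ small enough forces $J_\varepsilon|_{\|u\|_\varepsilon=\rho}\ge \zeta_\varepsilon>0$.

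For $(ii)$, I note $J_\varepsilon(0)=0$ trivially, then fix a nonzero, compactly supported $u_0\in E$ and study $J_\varepsilon(tu_0)$ as $t\to+\infty$. Integrating the differential inequality $\sigma\widetilde{M}(\tau)\ge \widetilde{M}'(\tau)\tau$ coming from $(M_1)$ gives $\widetilde{M}(\tau)\le \widetilde{M}(1)\tau^\sigma$ for $\tau\ge 1$, so the Kirchhoff part grows at most like $t^{2\sigma}$; the potential term grows exactly like $t^2$; the critical term contributes $-c\,t^{2_s^\ast}$. For the Choquard double integral, $(f_3)$ rewrites as $(\ln F(\tau))'\ge \mu/\tau$, yielding $F(\tau)\ge F(1)\tau^\mu$ for $\tau\ge 1$ (with $F(1)>0$ by $(f_3)$); restricting the integration to the (eventually positive-measure) set where $t|u_0|\ge 1$ produces a Choquard lower bound of order $t^{4\mu}$. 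Since $(f_3)$ gives $\mu>2\sigma$ (so $4\mu>2\sigma$) and $(M_1)$ gives $2_s^\ast>2\sigma>2$, the two negative terms dominate as $t\to+\infty$, hence $J_\varepsilon(tu_0)\to-\infty$; taking $e:=t_0u_0$ with $t_0$ large produces $\|e\|_\varepsilon>\rho$ and $J_\varepsilon(e)<0<\zeta_\varepsilon$. The only genuinely delicate point is the mixed scaling in $(i)$, which the convexity inequality above resolves; the remaining steps are routine, with $(f_3)$ supplying the polynomial lower bound on $F$ needed for $(ii)$ and $(M_1)$ supplying the matching upper bound on $\widetilde{M}$.
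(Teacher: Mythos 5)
Your argument follows the paper's proof almost line by line. For (i) the paper likewise bounds $J_\varepsilon(u)\ge\min\{m_1/(2\sigma),1/2\}\|u\|_\varepsilon^{2\sigma}-C\varepsilon^{-2s}\|u\|_\varepsilon^{2p}-\frac{\varepsilon^{-2s}}{2_s^\ast}S^{-2_s^\ast/2}\|u\|_\varepsilon^{2_s^\ast}$ and invokes $p>\max\{2,\sigma\}$; for (ii) it integrates $(M_1)$ to get $\widetilde{M}(t)\le\widetilde{M}(1)t^{\sigma}$ for $t\ge1$, discards the (nonnegative) Choquard term, and lets the critical term drive $J_\varepsilon(t\varphi_0)\to-\infty$. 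You are in fact more careful than the paper on the homogeneity mismatch: the convexity inequality $a^{\sigma}+b^{\sigma}\ge 2^{1-\sigma}(a+b)^{\sigma}$ together with $b\ge b^{\sigma}$ on the unit ball is exactly what is needed to legitimize the paper's passage to $\min\{m_1/(2\sigma),1/2\}\|u\|_\varepsilon^{2\sigma}$, which the paper asserts without comment. Your lower bound $F(\tau)\ge F(1)\tau^{\mu}$ in (ii) is correct but unnecessary, since dropping the Choquard term already suffices.

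The one assertion that does not follow from the hypotheses as stated is $2\sigma<4$. Condition $(M_1)$ only gives $\sigma<2_s^\ast/2$, and when $2s<N<4s$ one has $2_s^\ast>4$, so $\sigma\ge 2$ is not excluded; in that case the $-C\varepsilon^{-2s}\|u\|_\varepsilon^{4}$ term would dominate $\|u\|_\varepsilon^{2\sigma}$ near the origin and your estimate would fail to produce a positive minimum on a small sphere. The paper has the same (indeed a worse) problem, since it silently drops the $\|u\|_\varepsilon^{4}$ contribution from its displayed lower bound. The clean repair is to bound the Choquard term using $(f_2)$ directly: $F(t)\le\frac{2c_0}{p+1}t^{(p+1)/2}$ gives $F(|u|^2)\le C|u|^{p+1}$, and the Hardy--Littlewood--Sobolev inequality then yields a single negative term $C\varepsilon^{-2s}\|u\|_\varepsilon^{2(p+1)}$ with $2(p+1)>2p>2\sigma$; the spurious quartic term is an artifact of the cruder bound $F(|u|^2)\le C(|u|^2+|u|^p)$ recorded in Section 2. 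With that adjustment your proof of (i) closes, and (ii) is already complete.
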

\begin{proof}
For each $\varepsilon > 0$, by the fractional  Sobolev embedding, $(M_2)$ and $(f_2)$, we
have
\begin{eqnarray*}
J_\varepsilon(u) &:=&\frac12 \widetilde{M}\left([u]_{s,A}^2\right)+
\frac{\varepsilon^{-2s}}{2}\int_{\mathbb{R}^N}V(x)|u|^2dx-
\frac{\varepsilon^{-2s}}{4}\iint_{\mathbb{R}^{2N}}\frac{F(|u(x)|^2)F(|u(y)|^2)}{|x-y|^{\alpha}}dxdy
- \frac{\varepsilon^{-2s}}{2_s^\ast}\int_{\mathbb{R}^N}
|u|^{2_s^\ast}dx\\
 &\geq& \min\left\{\frac{m_1}{2\sigma},\frac{1}{2}\right
\}\|u\|_\varepsilon^{2\sigma} -
\varepsilon^{-2s}C\|u\|_\varepsilon^{2p}
-\frac{\varepsilon^{-2s}}{2_s^\ast}S^{\frac{-2_s^\ast}{2}}\|u\|_\varepsilon^{2_s^\ast},
\end{eqnarray*}
for all $u \in E$. It follows from $\max\{2, \sigma\} < p$ that
there exist small enough $\varrho_\varepsilon > 0$ and
$\alpha_\varepsilon
> 0$ such that $J_\varepsilon(u) \geq \alpha_\varepsilon > 0$ for
all $u \in E$ with $\|u\|_\varepsilon = \varrho_\varepsilon$, and
all $\varepsilon
> 0$. Hence (i) in Theorem~\ref{the5.1} holds.

Now we verify condition $(ii)$ in Theorem~\ref{the5.1}. Let
$\varphi_0 \in C_0^\infty(\mathbb{R}^N,\mathbb{C})$ with
$\|\varphi_0\|_\varepsilon = 1$. By $(M_2)$, we have
\begin{equation}\label{e5.1}
\widetilde{M}(t)\leq \widetilde{M}(1)t^\sigma \quad \text{for all
}t\geq 1.
\end{equation}
Then by $(f_3)$, the following holds
\begin{eqnarray*}
J_\varepsilon(t\varphi_0) &\leq& \widetilde{M}(1)t^{2\sigma} +
\frac{1}{2}t^{2} -
\frac{\varepsilon^{-2s}}{4}\int_{\mathbb{R}^N}(\mathcal{K}_\alpha*F(|t\varphi_0|^2))F(|t\varphi_0|^2)dx
- \frac{\varepsilon^{-2s}}{2_s^\ast}t^{2_s^\ast}|\varphi_0|_{2_s^\ast}^{2_s^\ast}\\
&\leq& \widetilde{M}(1)t^{2\sigma} + \frac{1}{2}t^{2} -
\frac{\varepsilon^{-2s}}{2_s^\ast}t^{2_s^\ast}|\varphi_0|_{2_s^\ast}^{2_s^\ast},
\end{eqnarray*}
and hence $J_\varepsilon(t\varphi_0) \rightarrow -\infty$ as $t
\rightarrow  \infty$, since $2\sigma < 2_s^\ast$. Therefore, there
exists large enough $t_0$ such that $J_\varepsilon(t_0\varphi_0) <
0$. Then we take $e = t_0\varphi_0$ and $J_\varepsilon(e) < 0$.
Hence $(ii)$ in
Theorem~\ref{the5.1} holds. The proof is thus
complete.
\end{proof}

\section{Verification of $(PS)_c$ condition}

In this section we recall the fractional version of concentration compactness
principle in the fractional Sobolev space, see \cite{PP, XZZ, zhang1} for more details.

\begin{lemma}{ \em (see \cite[Theorem 1.5]{PP})}\label{lemma3.1} Let $\Omega \subseteq \mathbb{R}^N$ be an open subset and let $\{u_n\}_n$ be a sequence  in
$H^s(\mathbb{R}^N)$, weakly converging to $u$ as $n \rightarrow
\infty$ and such that $|u_n|^{2_s^\ast}\rightharpoonup \nu$ and
$|(-\Delta)^{\frac{s}{2}} u_n|^2 \rightharpoonup \mu$ in the sense
of measures. Then either $u_n \rightarrow u$ in
$L_{{\rm loc}}^{2_s^\ast}(\mathbb{R}^N)$ or there exist a (at most
countable) set of distinct points $\{x_j\}_{j \in I} \subseteq
\overline{\Omega}$ and positive numbers $\{\nu_j\}_{j \in I}$ such
that
\begin{eqnarray*}
\nu = |u|^{2_s^\ast} + \sum_{j \in I} \delta_{x_j}\nu_j,\quad \nu_j
> 0.
\end{eqnarray*}
If, in addition, $\Omega$ is bounded, then there exist a positive
measure $\widetilde{\mu} \in \mathcal {M}(\mathbb{R}^N)$ with
$supp\widetilde{\mu} \subseteq \overline{\Omega}$ and positive
numbers $\{\mu_j\}_{j \in I}$ such that
\begin{eqnarray*}
\mu =  |(-\Delta)^{\frac{s}{2}} u|^2 + \widetilde{\mu} + \sum_{j \in
I} \delta_{x_j}\mu_j, \quad  \mu_j > 0
\end{eqnarray*}
and
\begin{eqnarray*}
\nu_j \leq (S^{-1}\mu(\{x_j\}))^{\frac{2_s^\ast}{2}},
\end{eqnarray*}
where $S$ is the best Sobolev constant, i.e.
\begin{eqnarray*} S = \inf\limits_{u \in
H^s(\mathbb{R}^N)}
\frac{\displaystyle\int_{\mathbb{R}^N}|(-\Delta)^{\frac{s}{2}}
u|^2dx}{\displaystyle\int_{\mathbb{R}^N}|u|^{2_s^\ast}dx},
\end{eqnarray*} $x_j \in
\mathbb{R}^N$, $\delta_{x_j}$ are Dirac measures at $x_j$ and
$\mu_j$, $\nu_j$ are constants.
\end{lemma}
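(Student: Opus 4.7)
The plan is to adapt Lions' classical second concentration compactness principle to the fractional Sobolev setting, proceeding in four stages.

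First, I would reduce to the case $u \equiv 0$ by setting $v_n = u_n - u$, so that $v_n \rightharpoonup 0$ in $H^s(\mathbb{R}^N)$. A Brezis--Lieb-type decomposition, valid in the fractional setting, gives
\begin{equation*}
|u_n|^{2_s^\ast} - |v_n|^{2_s^\ast} \longrightarrow |u|^{2_s^\ast}, \qquad |(-\Delta)^{s/2}u_n|^2 - |(-\Delta)^{s/2}v_n|^2 \longrightarrow |(-\Delta)^{s/2}u|^2
\end{equation*}
in $L^1_{\mathrm{loc}}(\mathbb{R}^N)$. Denoting by $\widetilde{\nu}$ and $\widetilde{\mu}$ the weak-$\ast$ limits of $|v_n|^{2_s^\ast}$ and $|(-\Delta)^{s/2} v_n|^2$ respectively, the problem reduces to understanding these two measures, since $\nu = |u|^{2_s^\ast} + \widetilde{\nu}$ and $\mu = |(-\Delta)^{s/2} u|^2 + \widetilde{\mu}$.

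Second, I would establish the key localized Sobolev inequality: for any $\varphi \in C_c^\infty(\mathbb{R}^N)$,
\begin{equation*}
\left(\int_{\mathbb{R}^N} |\varphi v_n|^{2_s^\ast}\,dx\right)^{2/2_s^\ast} \leq S^{-1}\int_{\mathbb{R}^N} |(-\Delta)^{s/2}(\varphi v_n)|^2\,dx.
\end{equation*}
The task is to expand the right-hand side as $\int \varphi^2|(-\Delta)^{s/2}v_n|^2\,dx$ plus remainders involving the commutator $[\varphi,(-\Delta)^{s/2}]v_n$, and to show that all remainders tend to zero. This may be achieved via a direct commutator estimate of Kato--Ponce type combined with the compact embedding $v_n \to 0$ in $L^2_{\mathrm{loc}}$, or by lifting to the Caffarelli--Silvestre extension on $\mathbb{R}^{N+1}_+$, where the product rule holds pointwise and the classical Lions argument transfers almost verbatim.

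Third, passing to the limit $n \to \infty$ in the inequality above yields
\begin{equation*}
\left(\int_{\mathbb{R}^N} |\varphi|^{2_s^\ast}\,d\widetilde{\nu}\right)^{2/2_s^\ast} \leq S^{-1}\int_{\mathbb{R}^N} |\varphi|^2\,d\widetilde{\mu}, \qquad \forall\,\varphi \in C_c^\infty(\mathbb{R}^N).
\end{equation*}
A reverse Hölder lemma for measures (due to Lions) then forces $\widetilde{\nu}$ to be purely atomic, $\widetilde{\nu} = \sum_{j \in I} \nu_j \delta_{x_j}$ for an at most countable index set $I$, with atomic masses $\mu_j = \widetilde{\mu}(\{x_j\})$ of $\widetilde{\mu}$ at the same points satisfying $\nu_j \leq (S^{-1}\mu_j)^{2_s^\ast/2}$. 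Combining this with the decomposition from the first step yields the stated representations for $\nu$ and $\mu$, together with the dichotomy between strong $L^{2_s^\ast}_{\mathrm{loc}}$ convergence (the case $I = \emptyset$) and genuine atomic concentration.

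The main obstacle is the commutator/localization estimate in the second step: unlike in the local case one does not have $(-\Delta)^{s/2}(\varphi v) = \varphi(-\Delta)^{s/2}v$, and the nonlocal remainder is not supported where $\varphi$ is. One must exploit the fact that the commutator is of order $s-1 < 0$, together with the strong local compactness $v_n \to 0$ in $L^2_{\mathrm{loc}}$, to drive the error to zero. Once this is in hand, the remainder of the argument is a faithful transcription of Lions' original scheme.
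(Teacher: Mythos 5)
The paper offers no proof of this lemma: it is quoted from Palatucci--Pisante \cite{PP}, where it is obtained as a corollary of a profile decomposition of G\'erard type for bounded sequences in $\dot H^s(\mathbb{R}^N)$. Your proposal instead transcribes Lions' original second concentration--compactness argument to the fractional setting, which is a genuinely different and more elementary route, and it is essentially correct. The reduction to $v_n=u_n-u\rightharpoonup 0$ is harmless (for the seminorm part note that the cross term $2\,(-\Delta)^{s/2}v_n\cdot(-\Delta)^{s/2}u$ vanishes only weakly, i.e.\ in the sense of measures, not in $L^1_{\mathrm{loc}}$ as written --- but that is all you use), and once the localized inequality $\bigl(\int|\varphi|^{2_s^\ast}d\widetilde\nu\bigr)^{2/2_s^\ast}\le S^{-1}\int|\varphi|^2\,d\widetilde\mu$ is established, Lions' reverse H\"older lemma for measures yields the atomic decomposition and the mass estimate exactly as you describe, with the dichotomy corresponding to $I=\emptyset$. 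The step you leave as a sketch, namely $\|[(-\Delta)^{s/2},\varphi]v_n\|_{L^2}\to 0$, is indeed the crux and it does go through: the commutator has order $s-1<0$ and kernel comparable to $(\varphi(x)-\varphi(y))|x-y|^{-N-s}$, so the far-field contribution is controlled by the decay of the kernel while the near-field contribution is killed by $v_n\to 0$ in $L^2_{\mathrm{loc}}$; in the Gagliardo-seminorm formulation this is exactly the splitting and estimate \eqref{e4.6} that the paper itself uses inside the proof of Lemma \ref{lemma3.4}, so your scheme is the one actually compatible with the rest of the paper. What the profile-decomposition proof of \cite{PP} buys in exchange for heavier machinery is finer structural information (the concentration profiles, the identification of $\widetilde\mu$ and of the support of the defect measures in $\overline\Omega$), which your argument recovers only at the level of measures.
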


In the case $\Omega = \mathbb{R}^N$, the above principle of
 concentration compactness does not provide any information about
 the possible loss of mass at infinity. The following result
 expresses this fact in quantitative terms.

\begin{lemma}{\em (see \cite[Lemma 3.5]{zhang1})}\label{lemma3.2} Let $\{u_n\}_n \subset H^s(\mathbb{R}^N)$ be such that $u_n \rightharpoonup
u$ weakly converges in $H^s(\mathbb{R}^N)$,  $|u_n|^{2_s^\ast}\rightharpoonup
\nu$ and $|(-\Delta)^{\frac{s}{2}} u_n|^2 \rightharpoonup \mu$
weakly-$\ast$ converges in $\mathcal {M}(\mathbb{R}^N)$ and define
\begin{itemize}
\item[$\mathrm{(i)}$]  $\mu_\infty = \lim\limits_{R\rightarrow
\infty}\limsup\limits_{n\rightarrow\infty}\displaystyle\int_{\{x \in
\mathbb{R}^N: |x|>R\}}|(-\Delta)^{\frac{s}{2}} u_n|^{2}dx$,

\item[$\mathrm{(ii)}$]   $\nu_\infty = \lim\limits_{R\rightarrow
\infty}\limsup\limits_{n\rightarrow\infty}\displaystyle\int_{\{x \in
\mathbb{R}^N: |x|>R\}}|u_n|^{2_s^\ast}dx$.
\end{itemize}
Then the quantities $\nu_\infty$ and $\mu_\infty$ exist and satisfy the following
\begin{itemize}
\item[$\mathrm{(iii)}$]  $\limsup\limits_{n\rightarrow\infty}\displaystyle\int_{\mathbb{R}^N}|(-\Delta)^{\frac{s}{2}}
u_n|^{2}dx = \displaystyle\int_{\mathbb{R}^N}d\mu + \mu_\infty$,

\item[$\mathrm{(iv)}$]   $\limsup\limits\limits_{n\rightarrow\infty}\displaystyle\int_{\mathbb{R}^N}|u_n|^{2_s^\ast}dx
= \displaystyle\int_{\mathbb{R}^N}d\nu + \nu_\infty$,

\item[$\mathrm{(v)}$] $\nu_\infty \leq (S^{-1}\nu_\infty)^{\frac{2_s^\ast}{2}}$.
\end{itemize}
\end{lemma}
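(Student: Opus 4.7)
The plan is to treat the five claims in order, the core idea being to isolate the behavior of $u_n$ at infinity by means of a sequence of smooth cut-off functions. For (i)--(ii), I would first observe that the $H^s$-boundedness of $\{u_n\}_n$ forces the integrands in the definitions of $\mu_\infty$ and $\nu_\infty$ to be uniformly bounded in both $n$ and $R$; hence $\limsup_{n\to\infty}$ of each yields a finite, non-negative function of $R$, and these functions are non-increasing in $R$ (larger $R$ means integrating over a smaller set). The outer limits as $R\to\infty$ therefore exist.

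For (iii)--(iv), I would fix a smooth cut-off $\psi_R$ equal to $1$ on $B_R$ and to $0$ outside $B_{R+1}$, and split each global integral into a compactly supported piece and a complementary piece. The compactly supported piece converges as $n\to\infty$ to $\int \psi_R\,d\nu$ (respectively $\int \psi_R\,d\mu$) by the weak-$\ast$ convergence hypothesis tested against the continuous, compactly supported function $\psi_R$, and by monotone convergence it tends to $\int d\nu$ (respectively $\int d\mu$) as $R\to\infty$. The complementary piece is sandwiched between $\int_{|x|>R+1}$ and $\int_{|x|>R}$, whose common limit is $\nu_\infty$ (respectively $\mu_\infty$) by (i)--(ii). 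Combining these ingredients yields the claimed identities.

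For (v), I would apply the fractional Sobolev inequality to $\phi_R u_n$, where $\phi_R \in C^\infty$ vanishes on $B_R$, equals $1$ outside $B_{2R}$, and satisfies $|\nabla\phi_R|\leq C/R$. This yields $\bigl(\int|\phi_R u_n|^{2_s^\ast}\,dx\bigr)^{2/2_s^\ast}\leq S^{-1}\int|(-\Delta)^{s/2}(\phi_R u_n)|^2\,dx$, and the task then reduces to showing that the right-hand side is asymptotically dominated by $\int \phi_R^2|(-\Delta)^{s/2}u_n|^2\,dx$ up to an error vanishing as $R\to\infty$. This step is the main obstacle: unlike the local case, the fractional commutator $[(-\Delta)^{s/2},\phi_R]$ is non-trivial, so I would switch to the Gagliardo integral representation and use the pointwise bound $|\phi_R(x)u_n(x)-\phi_R(y)u_n(y)|^2\leq 2\phi_R(x)^2|u_n(x)-u_n(y)|^2+2|u_n(y)|^2|\phi_R(x)-\phi_R(y)|^2$. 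The error term would then be controlled via $|\phi_R(x)-\phi_R(y)|\leq C|x-y|/R$ combined with the $L^2_{{\rm loc}}$-compactness of $\{u_n\}_n$ supplied by Proposition~\ref{pro1}. Passing first to $\limsup_{n\to\infty}$ and then to $R\to\infty$, the left-hand side converges to $\nu_\infty^{2/2_s^\ast}$ and the right-hand side to $S^{-1}\mu_\infty$, yielding the bound $\nu_\infty\leq(S^{-1}\mu_\infty)^{2_s^\ast/2}$ (which is the intended form of (v), the printed statement appearing to contain a typographical replacement of $\mu_\infty$ by $\nu_\infty$ inside the parentheses).
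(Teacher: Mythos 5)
The paper itself offers no proof of this lemma---it is quoted verbatim from \cite[Lemma 3.5]{zhang1}---so your proposal must stand on its own. Parts (i)--(iv) of your argument are sound: monotonicity in $R$ together with the uniform $H^s$-bound gives existence of the two limits, and the cut-off splitting with the sandwich $\int_{|x|>R+1}\leq\int(1-\psi_R)|u_n|^{2_s^\ast}\,dx\leq\int_{|x|>R}$ is the standard route (it is legitimate to add $\lim_n$ of the compactly supported piece to $\limsup_n$ of the tail because the former genuinely converges). You also correctly identify the typographical error in (v): the intended inequality is $\nu_\infty\leq(S^{-1}\mu_\infty)^{2_s^\ast/2}$.

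The gap is in (v), exactly at the step you call ``the main obstacle'' and then bypass. Switching to the Gagliardo representation does not dispose of the commutator problem, because $\mu_\infty$ is defined through the density $|(-\Delta)^{s/2}u_n|^2$, whereas your manipulation produces the localized Gagliardo energy $\iint\phi_R(x)^2|u_n(x)-u_n(y)|^2|x-y|^{-N-2s}\,dx\,dy$. The identity $[v]_s^2=\|(-\Delta)^{s/2}v\|_{L^2}^2$ is global (Plancherel); it has no local version, so the localized Gagliardo energy is not dominated by $\int_{|x|>R}|(-\Delta)^{s/2}u_n|^2\,dx$ through any pointwise comparison---for a fixed $u\in C_c^\infty(B_1)$ with nonzero mean the former decays like $R^{-2s}$ while the latter decays like $R^{-N-2s}$. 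Establishing that the two localizations agree in the double limit $\limsup_n$, $R\to\infty$ is precisely the content of the commutator estimate $\lim_{R\to\infty}\limsup_n\|(-\Delta)^{s/2}(\phi_Ru_n)-\phi_R(-\Delta)^{s/2}u_n\|_{L^2}=0$, after which $\int\phi_R^2|(-\Delta)^{s/2}u_n|^2\,dx\leq\int_{|x|>R}|(-\Delta)^{s/2}u_n|^2\,dx$ closes the argument; this is the route taken in \cite{zhang1}, and your proof is missing it. Two smaller defects: your pointwise bound carries a factor $2$ in front of the main term, which would at best yield $\nu_\infty\leq(2S^{-1}\mu_\infty)^{2_s^\ast/2}$---use Minkowski's inequality in $L^2(\mathbb{R}^{2N})$, or $(a+b)^2\leq(1+\delta)a^2+C_\delta b^2$, to recover the sharp constant; and the error term is controlled by the uniform $L^2$-bound combined with the gain $|\phi_R(x)-\phi_R(y)|\leq C|x-y|/R$ and a splitting of the region of integration (as in \eqref{e4.6} with $\rho=R$), not by $L^2_{\rm loc}$-compactness, which gives no information near infinity.
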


\indent The main result of this section is the following compactness
result:
\begin{lemma}\label{lemma3.4}Suppose that conditions {\rm ($V$),  ($M$)} and {\rm ($F$)}  hold.  Let  $\{u_n\}_n \subset E$ be a
$(PS)_c$ sequence of functional $J_\varepsilon$, i.e.
\begin{eqnarray*} J_\varepsilon(u_n) \rightarrow c \quad
\mbox{and}\quad J_\varepsilon'(u_n) \rightarrow 0 \quad \mbox{in} \
E'
\end{eqnarray*}
as $n \rightarrow \infty$, where $E'$ is the dual of $E$. Then for
any $0 < \varepsilon < 1$, $J_\varepsilon$ satisfies $(PS)_c$
condition, for all $c \in \left(0,\,
 \sigma_0\varepsilon^{\frac{8s\sigma}{2_s^{\ast}-4\sigma}}\right)$, where $\sigma_0 :=\left(\frac{1}{\mu}-\frac{1}{2_s^\ast}\right)\left(m_1S^{2\sigma}\right)^{\frac{2_s^{\ast}}{2_s^{\ast}-4\sigma}}$,
i.e.
 any $(PS)_c$-sequence $\{u_n\}_n \subset E$ has a strongly
convergent subsequence in $E$.
\end{lemma}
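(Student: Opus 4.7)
\medskip

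\noindent\textbf{Proof proposal.} The plan is to follow the classical three-stage scheme for Palais-Smale analysis with critical growth: (i) boundedness of the sequence, (ii) a concentration-compactness argument in the spirit of Lions to prevent loss of compactness, and (iii) an upgrade from weak to strong convergence.

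\emph{Step 1 (Boundedness).} First I would test the identity $c+o(1)+o(1)\|u_n\|_\varepsilon=J_\varepsilon(u_n)-\frac{1}{\mu}\langle J_\varepsilon'(u_n),u_n\rangle$. Using $(M_1)$ one controls the Kirchhoff part from below by $\bigl(\tfrac{1}{2\sigma}-\tfrac{1}{\mu}\bigr)M([u_n]_{s,A}^2)[u_n]_{s,A}^2$, and then $(M_2)$ turns this into a positive multiple of $[u_n]_{s,A}^{2\sigma}$. The Choquard term is absorbed using the Ambrosetti-Rabinowitz-type inequality $\mu F(t)\le f(t)t$ from $(f_3)$, and the critical power produces the factor $\bigl(\tfrac{1}{\mu}-\tfrac{1}{2_s^\ast}\bigr)\varepsilon^{-2s}\|u_n\|_{2_s^\ast}^{2_s^\ast}$, which is non-negative because $\mu<2_s^\ast$. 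Since $\mu>2\sigma$, this yields uniform bounds on $[u_n]_{s,A}^{2\sigma}$ and $\varepsilon^{-2s}\int V|u_n|^2$, whence $\{u_n\}_n$ is bounded in $E$.

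\emph{Step 2 (Weak limit and concentration).} Passing to a subsequence, $u_n\rightharpoonup u$ in $E$, $|u_n|\to|u|$ a.e., $[u_n]_{s,A}^2\to K_0$ for some $K_0\ge 0$, and by Proposition~\ref{pro1} together with the diamagnetic inequality (Lemma~\ref{lemma2.1}) the sequence $\{|u_n|\}_n$ is bounded in $H^s(\mathbb{R}^N)$. Applying Lemmas~\ref{lemma3.1} and \ref{lemma3.2} to $\{|u_n|\}_n$ yields at most countably many points $\{x_j\}_{j\in I}$ with masses $\mu_j,\nu_j>0$, together with the quantities $\mu_\infty,\nu_\infty$ and the inequalities $\nu_j\le(S^{-1}\mu_j)^{2_s^\ast/2}$ and $\nu_\infty\le(S^{-1}\mu_\infty)^{2_s^\ast/2}$.

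\emph{Step 3 (Ruling out concentration --- the main obstacle).} This is the technical heart of the proof. For each $j\in I$ I would pick a cutoff $\psi_{j,\rho}(x)=\psi\bigl((x-x_j)/\rho\bigr)\in C_c^\infty(\mathbb{R}^N)$ with $\psi\equiv 1$ on $B_1$, $\psi\equiv 0$ off $B_2$, and test $\langle J_\varepsilon'(u_n),\psi_{j,\rho}u_n\rangle=o(1)$. The delicate point is the magnetic quadratic form: its ``cross term'' arising from the commutator with $\psi_{j,\rho}$ must be estimated via H\"older, the gauge-invariant expansion of the phase factor $e^{i(x-y)\cdot A((x+y)/2)}$, and dominated convergence as $\rho\to 0$; the Choquard contribution vanishes in this limit because $F(|u_n|^2)$ concentrates on a negligible set under the Riesz kernel. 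What survives is the relation
\begin{equation*}
M(K_0)\,\mu_j \;=\; \varepsilon^{-2s}\nu_j.
\end{equation*}
Combining this with $K_0\ge\mu_j$, $(M_2)$ and $\nu_j\le(S^{-1}\mu_j)^{2_s^\ast/2}$ yields the dichotomy $\nu_j=0$ or a quantitative lower bound on $\nu_j$ of the form $\nu_j\ge c_1(m_1,S,\sigma)\,\varepsilon^{\,4s\sigma/(2_s^\ast-2\sigma)}$ (and analogously for $\nu_\infty$ using the cutoff $\chi_R(x)=1-\psi(x/R)$). Inserting this bound into the positive energy estimate from Step~1, namely $c\ge\bigl(\tfrac{1}{\mu}-\tfrac{1}{2_s^\ast}\bigr)\varepsilon^{-2s}(\nu_j+\nu_\infty)$, together with the Kirchhoff contribution $\bigl(\tfrac{1}{2\sigma}-\tfrac{1}{\mu}\bigr)m_1 K_0^{\sigma}$ and the estimate $K_0\ge \mu_j\ge S\nu_j^{2/2_s^\ast}$, produces a contradiction with the hypothesis $c<\sigma_0\varepsilon^{8s\sigma/(2_s^\ast-4\sigma)}$. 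Hence $\nu_j=0$ for all $j\in I$ and $\nu_\infty=0$.

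\emph{Step 4 (Strong convergence).} From Step~3 and the two concentration lemmas, $|u_n|\to|u|$ in $L^{2_s^\ast}(\mathbb{R}^N)$, and by interpolation with the $L^{2r}$ and $L^{pr}$ bounds established in the excerpt, the Choquard nonlinearity is continuous along the sequence. Evaluating $\langle J_\varepsilon'(u_n)-J_\varepsilon'(u),u_n-u\rangle\to 0$, using weak continuity of $M([u_n]_{s,A}^2)$ together with the uniform positivity of the Kirchhoff coefficient from $(M_2)$, and invoking the Brezis-Lieb lemma, I recover $[u_n-u]_{s,A}\to 0$ and $\int V|u_n-u|^2\to 0$, which gives $u_n\to u$ strongly in $E$. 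The hardest step is clearly Step~3, where one must combine the magnetic non-locality, the degenerate Kirchhoff factor, and the Riesz-convolution term in a single bubble estimate.
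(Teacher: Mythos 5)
Your proposal follows the same route as the paper's proof: the Ambrosetti--Rabinowitz test $J_\varepsilon(u_n)-\frac{1}{\mu}\langle J_\varepsilon'(u_n),u_n\rangle$ for boundedness, the fractional concentration--compactness lemmas (Lemmas~\ref{lemma3.1} and~\ref{lemma3.2}) applied to $\{|u_n|\}_n$ via the diamagnetic inequality, cutoff test functions $u_n\phi_\rho$ and $u_n\phi_R$ to obtain a dichotomy for each atom $\nu_j$ and for $\nu_\infty$, a contradiction with the energy level, and finally the Br\'ezis--Lieb lemma together with $\langle J_\varepsilon'(u_n),u_n\rangle\to0$ for strong convergence. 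In structure the two arguments coincide.

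The one point you must fix is the quantitative bubble estimate in Step~3, because as written it does not produce the threshold appearing in the statement. Your relation should be an inequality, $M(K_0)\,\mu_j\le\varepsilon^{-2s}\nu_j$ (the potential term on the left-hand side is merely nonnegative, and the cross term and Choquard term vanish in the limit), and combining it with $(M_2)$, $K_0\ge\mu_j$ and $\mu_j\ge S\nu_j^{2/2_s^\ast}$ gives
\begin{equation*}
m_1S^{\sigma}\nu_j^{2\sigma/2_s^\ast}\le\varepsilon^{-2s}\nu_j,
\qquad\text{hence}\qquad
\nu_j\ge\bigl(m_1S^{\sigma}\bigr)^{\frac{2_s^\ast}{2_s^\ast-2\sigma}}\varepsilon^{\frac{2s2_s^\ast}{2_s^\ast-2\sigma}},
\end{equation*}
which after multiplying by $\left(\frac{1}{\mu}-\frac{1}{2_s^\ast}\right)\varepsilon^{-2s}$ yields an energy lower bound of order $\varepsilon^{\frac{4s\sigma}{2_s^\ast-2\sigma}}$ with constant $\left(\frac{1}{\mu}-\frac{1}{2_s^\ast}\right)(m_1S^{\sigma})^{\frac{2_s^\ast}{2_s^\ast-2\sigma}}$ --- not the pair $\sigma_0$, $\varepsilon^{\frac{8s\sigma}{2_s^{\ast}-4\sigma}}$ demanded by the lemma. (Your displayed exponent $4s\sigma/(2_s^\ast-2\sigma)$ is in fact the bound for $c$, not for $\nu_j$; keep those two straight.) The paper arrives at the stated constants by applying the degenerate Kirchhoff bound to the \emph{localized} seminorm, i.e.\ by estimating the Kirchhoff term from below by $m_1\bigl(\mu(\{x_j\})\bigr)^{2\sigma}$, which leads to $\nu_j\ge(m_1S^{2\sigma})^{\frac{2_s^\ast}{2_s^\ast-4\sigma}}\varepsilon^{\frac{2s2_s^\ast}{2_s^\ast-4\sigma}}$ and hence to $\sigma_0\varepsilon^{\frac{8s\sigma}{2_s^{\ast}-4\sigma}}$. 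Since the lemma is stated with that specific $\sigma_0$ and exponent, you must either redo the bubble estimate along those lines or verify explicitly that your (different) threshold dominates the stated one uniformly for $0<\varepsilon<1$; a comparison of the two exponents alone only settles this for $\varepsilon$ small. Apart from this bookkeeping issue, and the omitted preliminary reduction to the case $\inf_n\|u_n\|_\varepsilon>0$ (needed so the degenerate coefficient $M$ does not vanish in the last step), the proposal is sound.
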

\begin{proof}
If $\inf_{n\geq1}\|u\|_\varepsilon = 0$, then there exists a
subsequence of $\{u_n\}_n$  (still denoted by $\{u_n\}_n$) such that
$u_n \rightarrow 0$ in $E$ as $n \rightarrow \infty$. Thus, we
assume that $d := \inf_{n\geq1}\|u\|_\varepsilon > 0$ in the
 sequel.\\
\indent By $J_\varepsilon(u_n) \rightarrow c$ and
$J_\varepsilon'(u_n) \rightarrow 0$ in $E'$, there exists $C > 0$
such that
\begin{eqnarray}\label{e4.1}
c+ o(1)\|u_n\|_\varepsilon&=&\nonumber J_\varepsilon(u_n) -
\frac{1}{\mu}\langle J_\varepsilon'(u_n), u_n\rangle = \frac12
\widetilde{M}\left([u_n]_{s,A}^2\right)-
\frac{1}{\mu}M\left([u_n]_{s,A}^2\right)[u_n]_{s,A}^2 \\
&&\nonumber
\mbox{}+\left(\frac{1}{2}-\frac{1}{\mu}\right)\varepsilon^{-2s}
\int_{\mathbb{R}^N}V(x)|u_n|^2dx +
\left(\frac{1}{\mu}-\frac{1}{2_s^\ast}\right)\varepsilon^{-2s}\int_{\mathbb{R}^N} |u_n|^{2_s^\ast}dx  \\
&& \mbox{} +
\varepsilon^{-2s}\int_{\mathbb{R}^N}(\mathcal{K}_\alpha*F(|u_n|^2))\left(\frac{1}{\mu}f(|u_n|^2)|u_n|^2-\frac{1}{4}F(|u_n|^2)\right)dx.
\end{eqnarray}
It follows by $(M_2)$ and $(f_3)$ that
\begin{eqnarray*}
C+ C\|u_n\|_\varepsilon&\geq&
\left(\frac{1}{2\sigma}-\frac{1}{\mu}\right)M\left([u_n]_{s,A}^2\right)[u_n]_{s,A}^2
+ \left(\frac{1}{2}-\frac{1}{\mu}\right)\varepsilon^{-2s}
\int_{\mathbb{R}^N}V(x)|u_n|^2dx \\
&\geq&\left(\frac{1}{2\sigma}-\frac{1}{\mu}\right)m_1[u_n]_{s,A}^{2\sigma}
+ \left(\frac{1}{2}-\frac{1}{\mu}\right)\varepsilon^{-2s}
\int_{\mathbb{R}^N}V(x)|u_n|^2dx.
\end{eqnarray*}
This, together with $2 < 2\sigma < 2_s^\ast$,  implies that
$\{u_n\}_n$ is bounded in $E$. Furthermore, we can obtain $c\geq 0$
by passing to the limit in \eqref{e4.1}. Hence, by diamagnetic
inequality, $\{|u_n|\}_n$ is bounded in $H^s(\mathbb{R}^N)$. Therefore
for some subsequence, there is $u \in E$
such that $u_n \rightharpoonup u$ in $E$.\\
\indent  Since $2 < p < \frac{2N-\alpha}{N-2s} < 2_s^\ast$ and $2 <
\frac{4N}{2N-\alpha} < 2_s^\ast$, by Proposition~\ref{pro1} we get that $|u_n|
\rightarrow |u|$ strongly in
$L^{\frac{2Np}{2N-\alpha}}(\mathbb{R}^N)\cap
L^{\frac{4N}{2N-\alpha}}(\mathbb{R}^N)$. Hence the Br\'{e}zis-Lieb
Lemma implies that $u_n \rightarrow u$ strongly in
$L^{\frac{2Np}{2N-\alpha}}(\mathbb{R}^N, \mathbb{C})\cap
L^{\frac{4N}{2N-\alpha}}(\mathbb{R}^N, \mathbb{C})$. By $(f_2)$, we
have
\begin{eqnarray*}
\int_{\mathbb{R}^N}
\left|F(|u_n|^2-F(|u|^2))\right|^{\frac{2N}{2N-\alpha}} dx &\leq&
\int_{\mathbb{R}^N}
\left|f(|u|^2+\varrho(|u_n|^2-|u|^2))\right|^{\frac{2N}{2N-\alpha}}\left||u_n|^2-|u|^2\right|^{\frac{2N}{2N-\alpha}}
dx\\
&\leq& \int_{\mathbb{R}^N}
\left[C(1+(|u_n|+|u|)^{p-2})\right]^{\frac{2N}{2N-\alpha}}\left(|u_n|+|u|\right)^{\frac{2N}{2N-\alpha}}|u_n-u|^{\frac{2N}{2N-\alpha}}
dx\\
&\leq&
C^{{\frac{2N}{2N-\alpha}}}2^{\frac{\alpha}{2N-\alpha}}\int_{\mathbb{R}^N}\left(|u_n|+|u|\right)^{\frac{2N}{2N-\alpha}}|u_n-u|^{\frac{2N}{2N-\alpha}}
dx\\
&&\mbox{}\ +
C^{{\frac{2N}{2N-\alpha}}}2^{\frac{\alpha}{2N-\alpha}}\int_{\mathbb{R}^N}\left(|u_n|+|u|\right)^{(p-1)\frac{2N}{2N-\alpha}}|u_n-u|^{\frac{2N}{2N-\alpha}}
dx.
\end{eqnarray*}
Using the H\"{o}lder inequality, we
can
deduce
\begin{eqnarray*}
\int_{\mathbb{R}^N}
\left|F(|u_n|^2-F(|u|^2))\right|^{\frac{2N}{2N-\alpha}} dx &\leq&
C^{{\frac{2N}{2N-\alpha}}}2^{\frac{\alpha}{2N-\alpha}}\|(|u_n|+|u|)^{\frac{2N}{2N-\alpha}}\|_{L^2(\mathbb{R}^N)}\||u_n-u|^{\frac{2N}{2N-\alpha}}\|_{L^2(\mathbb{R}^N)}\\
&& +
C^{{\frac{2N}{2N-\alpha}}}2^{\frac{\alpha}{2N-\alpha}}\|(|u_n|+|u|)^{(p-1)\frac{2N}{2N-\alpha}}\|_{L^{\frac{p}{p-1}}(\mathbb{R}^N)}\||u_n-u|^{\frac{2N}{2N-\alpha}}\|_{L^p(\mathbb{R}^N)}\\
&\leq& C\||u_n-u|^{\frac{2N}{2N-\alpha}}\|_{L^2(\mathbb{R}^N)} +
C\||u_n-u|^{\frac{2N}{2N-\alpha}}\|_{L^p(\mathbb{R}^N)}\\
&\rightarrow& 0,
\end{eqnarray*}
as $n \rightarrow \infty$, where $C > 0$ is independent of $n$. Thus,
we obtain that $F(|u_n|^2 \rightarrow F(|u|^2)$ in
$L^{\frac{2N}{2N-\alpha}}(\mathbb{R}^N)$. Note that by the
Hardy-Littlewood-Sobolev inequality, the Riesz potential defines a
linear continuous map form $L^{\frac{2N}{2N-\alpha}}(\mathbb{R}^N)$
to $L^{\frac{2N}{\alpha}}(\mathbb{R}^N)$. Then
\begin{eqnarray}\label{e4.3}
\mathcal{K}_\alpha*F(|u_n|^2) \rightarrow
\mathcal{K}_\alpha*F(|u|^2) \quad \mbox{in}\quad
L^{\frac{2N}{\alpha}}(\mathbb{R}^N)
\end{eqnarray}
as $n \rightarrow \infty$.\\
\indent For $\varphi \in E$ fixed, by $(f_2)$ with $\varepsilon = 1$
we have
\begin{eqnarray*}
\int_{\mathbb{R}^N}
\left|f(|u_n|^2)u_n\overline{\varphi}\right|^{\frac{2N}{2N-\alpha}}
dx &\leq&
C^{{\frac{2N}{2N-\alpha}}}2^{\frac{\alpha}{2N-\alpha}}\left(\int_{\mathbb{R}^N}(|u_n||\varphi|)^{\frac{2N}{2N-\alpha}}
dx +
\int_{\mathbb{R}^N}|u_n|^{(p-1)\frac{2N}{2N-\alpha}}|\varphi|^{\frac{2N}{2N-\alpha}}
dx\right)\\
&\leq& C^{{\frac{2N}{2N-\alpha}}}2^{\frac{\alpha}{2N-\alpha}}
\left(\||u_n|^{\frac{2N}{2N-\alpha}}\|_{L^2(\mathbb{R}^N)}\||\varphi|^{\frac{2N}{2N-\alpha}}\|_{L^2(\mathbb{R}^N)}\right.\\
&&\mbox{}\left. +
\||u_n|^{(p-1)\frac{2N}{2N-\alpha}}\|_{L^{\frac{p}{p-1}}(\mathbb{R}^N)}\||\varphi|^{\frac{2N}{2N-\alpha}}\|_{L^p(\mathbb{R}^N)}\right)\\
&\leq& C,
\end{eqnarray*}
thanks to $2 < \frac{4N}{2N-\alpha} < 2_s^\ast$ and $2 <
\frac{2pN}{2N-\alpha} < 2_s^\ast$, where $C > 0$ denotes various
constants.

Clearly, $f(|u_n|^2)u_n\overline{\varphi} \rightarrow
f(|u|^2)u\overline{\varphi}$ a.e. in $\mathbb{R}^N$. Hence, up to a
subsequence, $\mbox{Re}\left\{f(|u_n|^2)u_n\overline{\varphi}\right\}$
weakly converges to $\mbox{Re}\left\{f(|u|^2)u\overline{\varphi}\right\}$ in
$L^{\frac{2N}{2N-\alpha}}(\mathbb{R}^N)$. This together with
\eqref{e4.3} yields that
\begin{eqnarray}\label{e4.4}
\lim_{n \rightarrow \infty}\mbox{Re}\int_{\mathbb{R}^N}
(\mathcal{K}_\alpha*F(|u_n|^2))f(|u_n|^2)u_n\overline{\varphi}dx  =
\lim_{n \rightarrow \infty}\mbox{Re}\int_{\mathbb{R}^N}
(\mathcal{K}_\alpha*F(|u|^2))f(|u|^2)u\overline{\varphi}dx
\end{eqnarray}
for each $\varphi \in E$.\\
\indent We claim that as $n\rightarrow \infty$
\begin{equation}\label{e4.4}
\int_{\mathbb{R}^N} |u_n|^{2_s^\ast} dx \rightarrow
\int_{\mathbb{R}^N} |u|^{2_s^\ast} dx.
\end{equation}
In order to prove this claim, we invoke Prokhorov's Theorem (see Theorem
8.6.2 in \cite{Bog}) to conclude that there exist $\mu, \nu \in \mathcal {M}(\mathbb{R}^N)$
such that
\begin{eqnarray*}
\begin{gathered}
|(-\Delta)^{\frac{s}{2}}
u_n|^2 \rightharpoonup \mu \quad (\text{weak*-sense of measures}),\\
|u_n|^{2_s^\ast}\rightharpoonup \nu\quad (\text{weak*-sense of
measures}),
\end{gathered}
\end{eqnarray*}
where $\mu$ and $\nu$ are a nonnegative  bounded measures on
$\mathbb{R}^N$.  It follows by Lemma \ref{lemma3.1} that
either $u_n \rightarrow u$ in
$L_{loc}^{2_s^\ast}(\mathbb{R}^N)$ or $\nu = |u|^{2_s^\ast} +
\sum_{j \in I} \delta_{x_j}\nu_j$, as $n \rightarrow \infty$, where
$I$ is a countable set, $\{\nu_j\}_j
\subset [0, \infty)$, $\{x_j\}_j \subset \mathbb{R}^N$.\\
\indent Take $\phi \in C_0^\infty(\mathbb{R}^N)$ such that $0 \leq
\phi \leq 1$; $\phi \equiv 1$ in $B(x_j, \rho)$, $\phi(x) = 0$ in
$\mathbb{R}^N \setminus B(x_j, 2\rho)$. For any $\rho
> 0$, define $\phi_\rho =
\phi\left(\frac{x-x_j}{\rho}\right)$, where $j \in I$. It follows
that
\begin{eqnarray}\label{e4.5}
&&\nonumber\iint_{\mathbb{R}^{2N}}\frac{|u_n(x)\phi_\rho(x)-u_n(y)\phi_\rho(y)|^2}{|x-y|^{N+2s}}dxdy\\
&&\nonumber\mbox{}\ \leq
2\iint_{\mathbb{R}^{2N}}\frac{|u_n(x)-u_n(y)|^2\phi_\rho^2(y)}{|x-y|^{N+2s}}dxdy
+
2\iint_{\mathbb{R}^{2N}}\frac{|\phi_\rho(x)-\phi_\rho(y)|^2|u_n(x)|^2}{|x-y|^{N+2s}}dxdy\\
&&\mbox{}\ \leq
2\iint_{\mathbb{R}^{2N}}\frac{|u_n(x)-u_n(y)|^2}{|x-y|^{N+2s}}dxdy
+
2\iint_{\mathbb{R}^{2N}}\frac{|\phi_\rho(x)-\phi_\rho(y)|^2|u_n(x)|^2}{|x-y|^{N+2s}}dxdy.
\end{eqnarray}
Similar to the proof of Lemma 3.4 in \cite{zhang2}, we can show that
\begin{eqnarray}\label{e4.6}
\iint_{\mathbb{R}^{2N}}\frac{|\phi_\rho(x)-\phi_\rho(y)|^2|u_n(x)|^2}{|x-y|^{N+2s}}dxdy
\leq C\rho^{-2s} \int_{B(x_i,K\rho)}|u_n(x)|^2dx + CK^{-N},
\end{eqnarray}
where $K > 4$. Since $\{u_n\}_n$ is bounded in $E$,  it follows from
\eqref{e4.5} and  \eqref{e4.6} that $\{u_n\phi_\rho\}_n$ is bounded in
$E$. Then $\langle J_\varepsilon'(u_n), u_n\phi_\rho\rangle
\rightarrow 0$, which implies
\begin{eqnarray}\label{e4.7}
&&\nonumber
M\left([u_n]_{s,A}^2\right)\iint_{\mathbb{R}^{2N}}\frac{|u_n(x)-e^{i(x-y)\cdot
A(\frac{x+y}{2})}u_n(y)|^2\phi_\rho(y)}{|x-y|^{N+2s}}dx dy +
\varepsilon^{-2s}
\int_{\mathbb{R}^N}V(x)|u_n|^2\phi_\rho(x) dx\\
&&\mbox{}\nonumber = - \mbox{Re}\left\{
M\left([u_n]_{s,A}^2\right)\iint_{\mathbb{R}^{2N}}\frac{(u_n(x)-e^{i(x-y)\cdot
A(\frac{x+y}{2})}u_n(y))\overline{u_n(x)(\phi_\rho(x)-\phi_\rho(y))}}{|x-y|^{N+2s}}dx dy\right\}\\
&&\mbox{}\ \  + \varepsilon^{-2s}
\int_{\mathbb{R}^N}|u_n|^{2_s^\ast}\phi_\rho dx +
\varepsilon^{-2s}\int_{\mathbb{R}^N}\mathcal{K}_\alpha*F(|u_n|^2))f(|u_n|^2)|u_n|^2\phi_\rho
dx +o_n(1).
\end{eqnarray}
Note that by $(M_2)$ and diamagnetic inequality, the following holds
\begin{eqnarray*}
&&
M\left([u_n]_{s,A}^2\right)\iint_{\mathbb{R}^{2N}}\frac{|u_n(x)-e^{i(x-y)\cdot
A(\frac{x+y}{2})}u_n(y)|^2\phi_\rho(y)}{|x-y|^{N+2s}}dx
dy \\
&& \mbox{} \ \ \geq m_1
\left(\iint_{\mathbb{R}^{2N}}\frac{|u_n(x)-e^{i(x-y)\cdot
A(\frac{x+y}{2})}u_n(y)|^2\phi_\rho(y)}{|x-y|^{N+2s}}dx
dy\right)^{2\sigma}\\
&& \mbox{} \ \ \geq m_1
\left(\iint_{\mathbb{R}^{2N}}\frac{\left||u_n(x)|-|u_n(y)|\right|^2\phi_\rho(y)}{|x-y|^{N+2s}}dxdy\right)^{2\sigma}.
\end{eqnarray*}
It is easy to verify that
\begin{eqnarray*}
\iint_{\mathbb{R}^{2N}}\frac{\left||u_n(x)|-|u_n(y)|\right|^2\phi_\rho(y)}{|x-y|^{N+2s}}dxdy\rightarrow
\int_{\mathbb{R}^{N}}\phi_\rho d\mu,
\end{eqnarray*}
as $n \rightarrow \infty$ and
\begin{eqnarray*}
\int_{\mathbb{R}^{N}}\phi_\rho d\mu \rightarrow \mu(\{x_i\})
\end{eqnarray*}
as $\rho \rightarrow 0$. Note that the  H\"{o}lder inequality
implies
\begin{eqnarray}\label{e4.8}
&&\nonumber\left|\mbox{Re}\left\{
M\left([u_n]_{s,A}^2\right)\int\int_{\mathbb{R}^{2N}}\frac{(u_n(x)-e^{i(x-y)\cdot
A(\frac{x+y}{2})}u_n(y))\overline{u_n(x)(\phi_\rho(x)-\phi_\rho(y))}}{|x-y|^{N+2s}}dx dy\right\}\right|\\
&&\nonumber \mbox{} \ \leq
C\iint_{\mathbb{R}^{2N}}\frac{|u_n(x)-e^{i(x-y)\cdot
A(\frac{x+y}{2})}u_n(y)|\cdot|\phi_\rho(x)-\phi_\rho(y)|\cdot|u_n(x)|}{|x-y|^{N+2s}}dxdy
\\
&& \mbox{} \  \leq C
\left(\iint_{\mathbb{R}^{2N}}\frac{|u_n(x)|^2|\phi_\rho(x)-\phi_\rho(y)|^2}{|x-y|^{N+2s}}dxdy\right)^{1/2}.
\end{eqnarray}
\indent  Similar to the proof of Lemma 3.4 in \cite{zhang2}, we can show that
\begin{eqnarray}\label{e4.9}
\lim_{\rho\rightarrow
 0}\lim_{n\rightarrow\infty}\iint_{\mathbb{R}^{2N}}\frac{|u_n(x)|^2|\phi_\rho(x)-\phi_\rho(y)|^2}{|x-y|^{N+2s}}dxdy
=0.
\end{eqnarray}
It follows from
\begin{eqnarray*}
\lim_{n\rightarrow\infty}\int_{\mathbb{R}^N}\mathcal{K}_\alpha*F(|u_n|^2))f(|u_n|^2)|u_n|^2\phi_\rho
dx =
\int_{\mathbb{R}^N}\mathcal{K}_\alpha*F(|u|^2))f(|u|^2)|u|^2\phi_\rho
dx
\end{eqnarray*}
and
\begin{eqnarray*}
\lim_{\rho\rightarrow 0}
\int_{\mathbb{R}^N}\mathcal{K}_\alpha*F(|u|^2))f(|u|^2)|u|^2\phi_\rho
dx = 0
\end{eqnarray*}
that
\begin{eqnarray*}
\lim_{\rho\rightarrow
 0}\lim_{n\rightarrow\infty}\int_{\mathbb{R}^N}\mathcal{K}_\alpha*F(|u_n|^2))f(|u_n|^2)|u_n|^2\phi_\rho
dx =0.
\end{eqnarray*}
Since $\phi_\rho$ has compact support, letting $n\to\infty$ in
\eqref{e4.7}, we can deduce  from \eqref{e4.8}-\eqref{e4.9} and the
diamagnetic inequality that
$$
m_1\left(\mu(\{x_j\})\right)^{2\sigma}\leq \varepsilon^{-2s}\nu_j.
$$
 Combining this fact with Lemma \ref{lemma3.1}, we obtain
$$\nu_j \geq m_1\varepsilon^{2s}S^{2\sigma}
\nu_j^{\frac{4\sigma}{2_s^{\ast}}}.$$ This result implies that
$${\rm (I)} \quad \nu_j = 0 \ \indent \mbox{or}\ \quad {\rm (II)} \quad  \nu_j \geq \left(m_1S^{2\sigma}\right)^{\frac{2_s^{\ast}}{2_s^{\ast}-4\sigma}}\varepsilon^{\frac{2s2_s^{\ast}}{2_s^{\ast}-4\sigma}}.$$
To obtain the possible concentration of mass at infinity, we
similarly define a cut off function $\phi_R \in
C_0^\infty(\mathbb{R}^N)$ such that $\phi_R(x)=0$ on $|x| < R$ and
$\phi_R(x)=1$ on $|x| > R+1$. We can verify that $\{u_n\phi_R\}_n$
is bounded in $E$, hence $\langle J_\varepsilon'(u_n),
u_n\phi_R\rangle \rightarrow 0$,  as $n \rightarrow \infty$, which
implies
\begin{eqnarray}\label{e4.11}
&&\nonumber
M\left([u_n]_{s,A}^2\right)\iint_{\mathbb{R}^{2N}}\frac{|u_n(x)-e^{i(x-y)\cdot
A(\frac{x+y}{2})}u_n(y)|^2\phi_R(y)}{|x-y|^{N+2s}}dx dy
+\varepsilon^{-2s}
\int_{\mathbb{R}^N}V(x)|u_n|^2\phi_R(x) dx\\
&&\mbox{}\nonumber = - \mbox{Re}\left\{
M\left([u_n]_{s,A}^2\right)\iint_{\mathbb{R}^{2N}}\frac{(u_n(x)-e^{i(x-y)\cdot
A(\frac{x+y}{2})}u_n(y))\overline{u_n(x)(\phi_R(x)-\phi_R(y))}}{|x-y|^{N+2s}}dx dy\right\}\\
&&\mbox{}\ \  +
\varepsilon^{-2s}\int_{\mathbb{R}^N}|u_n|^{2_s^\ast}\phi_R dx +
\varepsilon^{-2s}\int_{\mathbb{R}^N}\mathcal{K}_\alpha*F(|u_n|^2))f(|u_n|^2)|u_n|^2\phi_R(x)
dx +o_n(1).
\end{eqnarray}
It is easy to verify that
\begin{eqnarray*}
\limsup\limits_{R\rightarrow\infty}\limsup\limits_{n\rightarrow\infty}\iint_{\mathbb{R}^{2N}}\frac{||u_n(x)|-|u_n(y)||^2\phi_R(y)}{|x-y|^{N+2s}}dxdy
= \mu_\infty
\end{eqnarray*}
and
\begin{eqnarray*}
&& \left|\mbox{Re}\left\{
M\left([u_n]_{s,A}^2\right)\iint_{\mathbb{R}^{2N}}\frac{(u_n(x)-e^{i(x-y)\cdot
A(\frac{x+y}{2})}u_n(y))\overline{u_n(x)(\phi_R(x)-\phi_R(y))}}{|x-y|^{N+2s}}dx
dy\right\}\right| \\
&& \mbox{}  \leq
C\left(\iint_{\mathbb{R}^{2N}}\frac{|u_n(x)|^2|\phi_R(x)-\phi_R(y)|^2}{|x-y|^{N+2s}}dxdy\right)^{1/2}.
\end{eqnarray*}
Note that
\begin{eqnarray*}
&& \limsup\limits_{R \rightarrow \infty}\limsup\limits_{n
\rightarrow \infty}
\iint_{\mathbb{R}^{2N}}\frac{|u_n(x)|^2|\phi_R(x)-\phi_R(y)|^2}{|x-y|^{N+2s}}dxdy
\\
&& \mbox{} = \limsup\limits_{R \rightarrow \infty}\limsup\limits_{n
\rightarrow \infty}
\iint_{\mathbb{R}^{2N}}\frac{|u_n(x)|^2|(1-\phi_R(x))-(1-\phi_R(y))|^2}{|x-y|^{N+2s}}dxdy.
\end{eqnarray*}
Similar to the proof of Lemma 3.4 in \cite{zhang2}, we can show that
\begin{eqnarray*}
\limsup\limits_{R \rightarrow \infty}\limsup\limits_{n \rightarrow
\infty}
\iint_{\mathbb{R}^{2N}}\frac{|u_n(x)|^2|(1-\phi_R(x))-(1-\phi_R(y))|^2}{|x-y|^{N+2s}}dxdy
= 0.
\end{eqnarray*}
It follows from the fact that  $(M_2)$, Lemma \ref{lemma2.1} and
Lemma \ref{lemma3.2} that
\begin{eqnarray*}
&& \limsup\limits_{R \rightarrow \infty}\limsup\limits_{n
\rightarrow
\infty}M\left([u_n]_{s,A}^2\right)\iint_{\mathbb{R}^{2N}}\frac{|u_n(x)-e^{i(x-y)\cdot
A(\frac{x+y}{2})}u_n(y)|^2\phi_R(y)}{|x-y|^{N+2s}}dx
dy \\
&& \mbox{} \ \ \geq \limsup\limits_{R \rightarrow
\infty}\limsup\limits_{n \rightarrow \infty}m_1
\left(\iint_{\mathbb{R}^{2N}}\frac{|u_n(x)-e^{i(x-y)\cdot
A_\varepsilon(\frac{x+y}{2})}u_n(y)|^2\phi_R(y)}{|x-y|^{N+2s}}dx
dy\right)^{2\sigma}\\
&& \mbox{} \ \ \geq \limsup\limits_{R \rightarrow
\infty}\limsup\limits_{n \rightarrow \infty} m_1
\left(\iint_{\mathbb{R}^{2N}}\frac{\left||u_n(x)|-|u_n(y)|\right|^2\phi_R(y)}{|x-y|^{N+2s}}dxdy\right)^{2\sigma}
= m_1 \mu_\infty^{2\sigma}.
\end{eqnarray*}
It is easy to see that
\begin{eqnarray*}
\lim_{R \rightarrow \infty}\lim_{n \rightarrow
\infty}\int_{\mathbb{R}^N}\mathcal{K}_\alpha*F(|u_n|^2))f(|u_n|^2)|u_n|^2\phi_R(x)
dx = 0.
\end{eqnarray*}
By Lemma \ref{lemma3.2} and letting $R \to \infty$ in \eqref{e4.11},
we obtain
$$\nu_\infty \geq m_1\varepsilon^{2s}S^{2\sigma}
\nu_\infty^{\frac{4\sigma}{2_s^{\ast}}}.$$ This result implies that
$${\rm (III)} \quad \nu_\infty = 0 \ \indent \mbox{or}\ \quad {\rm (IV)} \quad  \nu_\infty \geq \left(m_1S^{2\sigma}\right)^{\frac{2_s^{\ast}}{2_s^{\ast}-4\sigma}}\varepsilon^{\frac{2s2_s^{\ast}}{2_s^{\ast}-4\sigma}}.$$
\indent Next, we claim that $(II)$ and $(IV)$ cannot occur. If the
case $(IV)$ holds for some $j \in I$, then by Lemma \ref{lemma3.2},
$(M)$ and $(H)$, we have
\begin{eqnarray*}
c &=& \lim_{n \rightarrow \infty}\left(J_\varepsilon(u_n) -
\frac{1}{\mu}\langle
J'_\varepsilon(u_n), u_n\rangle\right)\\
&\geq&  \left(\frac{1}{2\sigma}-\frac{1}{\mu}\right)
M\left([u_n]_{s,A}^2\right)[u_n]_{s,A}^2
+\left(\frac{1}{2}-\frac{1}{\mu}\right)\varepsilon^{-2s}
\int_{\mathbb{R}^N}V(x)|u_n|^2dx   \\
 &&\nonumber \mbox{} +
\left(\frac{1}{\mu}-\frac{1}{2_s^\ast}\right)\varepsilon^{-2s}\int_{\mathbb{R}^N} |u_n|^{2_s^\ast}dx
+ \varepsilon^{-2s} \int_{\mathbb{R}^N}(\mathcal{K}_\alpha*F(|u_n|^2))\left(\frac{1}{\mu}f(|u_n|^2)|u_n|^2-\frac{1}{4}F(|u_n|^2)\right)dx\\
&\geq&
\left(\frac{1}{\mu}-\frac{1}{2_s^\ast}\right)\varepsilon^{-2s}\int_{\mathbb{R}^N}
|u_n|^{2_s^\ast}dx
 \geq
\left(\frac{1}{\mu}-\frac{1}{2_s^\ast}\right)\varepsilon^{-2s}\nu_\infty
\\
&\geq&
\left(\frac{1}{\mu}-\frac{1}{2_s^\ast}\right)\left(m_1S^\sigma\right)^{\frac{2_s^{\ast}}{2_s^{\ast}-4\sigma}}\varepsilon^{\frac{4s\sigma}{2_s^{\ast}-4\sigma}}
= \sigma_0\varepsilon^{\frac{8s\sigma}{2_s^{\ast}-4\sigma}},
\end{eqnarray*}
where $\sigma_0 =
\left(\frac{1}{\mu}-\frac{1}{2_s^\ast}\right)\left(m_1S^\sigma\right)^{\frac{2_s^{\ast}}{2_s^{\ast}-2\sigma}}$,
which is impossible.

Consequently, $\nu_j = 0$ for all $j\in I$.
Similarly, we can prove that $(II)$ cannot occur for any $j$. Thus
\begin{eqnarray} \label{e3.11}
\int_{\mathbb{R}^N}|u_n|^{2_s^\ast}dx \rightarrow
\int_{\mathbb{R}^N}|u|^{2_s^\ast}dx.
\end{eqnarray}
The Br\'{e}zis-Lieb Lemma implies that
\begin{eqnarray*}
\lim\limits_{n \rightarrow
\infty}\int_{\mathbb{R}^N}|u_n-u|^{2_s^\ast}dx = 0.
\end{eqnarray*}
Therefore, we get
\begin{eqnarray*}
u_n \rightarrow u \quad \mbox{in}\quad L^{2_s^\ast}(\mathbb{R}^N)
\quad \mbox{as}\quad n \rightarrow \infty.
\end{eqnarray*}
By the weak lower semicontinuity of the norm,
conditon $(m_1)$ and the Br\'{e}zis-Lieb Lemma, we have
\begin{eqnarray*}
 o(1)\|u_n\|_\varepsilon &=& \nonumber\langle J_\varepsilon'(u_n), u_n\rangle =  M\left([u_n]_{s,A}^2\right)[u_n]_{s,A}^2
+ \varepsilon^{-2s}\int_{\mathbb{R}^N} V(x)|u_n|^2dx \\
&&\mbox{} - \varepsilon^{-2s}\int_{\mathbb{R}^N}|u_n|^{2_s^\ast}dx
- \varepsilon^{-2s}\int_{\mathbb{R}^N}\mathcal{K}_\alpha*F(|u_n|^2)f(|u_n|^2)|u_n|^2dx\\
&\geq& m_1\left([u_n]_{s,A}^{2\sigma} - [u]_{s,A}^{2\sigma}\right)
+ \varepsilon^{-2s}\int_{\mathbb{R}^N} V(x)(|u_n|^2-|u|^2)dx + M\left([u]_{s,A}^2\right)[u]_{s,A}^2\\
&&\mbox{}+ \varepsilon^{-2s}\int_{\mathbb{R}^N} V(x)|u|^2dx -
 \varepsilon^{-2s}\int_{\mathbb{R}^N}|u|^{2_s^\ast}dx -
\varepsilon^{-2s}\int_{\mathbb{R}^N}\mathcal{K}_\alpha*F(|u|^2)f(|u|^2)|u|^2dx\\
&\geq& \min\{m_1,1\}\min\{\|u_n - u\|_\varepsilon^{2\sigma}, \|u_n -
u\|_\varepsilon^2\}    + o(1)\|u\|_\varepsilon.
\end{eqnarray*}
Here we use the fact that $J_\varepsilon'(u) = 0$.  Thanks to $2 <
2\sigma$, we have proved that $\{u_n\}_n$ strongly converges to $u$
in $E$. Hence the proof is complete.
\end{proof}

\section{Proofs of Main  Theorems}
\indent In this section, we shall prove our main results.  We shall first establish Theorem~\ref{the3.1}.

Note that
$J_\varepsilon(u)$ does not satisfy $(PS)_c$ condition for any $c >
0$. Thus, in the sequel we shall find a special finite-dimensional
subspace by which we construct sufficiently
small minimax levels. \\
\indent Recall that the assumption $(V)$ implies that there is $x_0
\in \mathbb{R}^N$ such that $V(x_0) = \min_{x\in \mathbb{R}^N} V(x)
= 0$. Without loss of generality we can assume from now on that $x_0
= 0$.
\begin{proposition} {\rm (see \cite[Theorem 3.2]{zhang3})}\label{pro4.1}
For any $q \in (2, 2_s^\ast)$, we have
\begin{displaymath}
\inf\left\{\iint_{\mathbb{R}^{2N}}\frac{|\phi(x)-\phi(y)|^2}{|x-y|^{N+2s}}dxdy:
\phi \in C_0^\infty (\mathbb{R}^N), |\phi|_q = 1\right\} = 0.
\end{displaymath}
\end{proposition}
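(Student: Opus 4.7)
The plan is to prove this by a direct scaling argument: fix an arbitrary test function in $C_0^\infty(\mathbb{R}^N)$ and dilate it to produce a family in which the Gagliardo seminorm tends to zero while the $L^q$ norm stays fixed. This works precisely because $q$ is strictly subcritical, so the two quantities scale with different powers of the dilation parameter.

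Concretely, I would fix $\phi \in C_0^\infty(\mathbb{R}^N)$ with $|\phi|_q > 0$ and define, for $\lambda > 0$,
\begin{equation*}
\phi_\lambda(x) = \lambda^{N/q}\,\phi(\lambda x).
\end{equation*}
This preserves the class $C_0^\infty(\mathbb{R}^N)$ and, by the change of variables $y = \lambda x$, one checks immediately that $|\phi_\lambda|_q = |\phi|_q$. For the seminorm, the substitution $x = u/\lambda$, $y = v/\lambda$ in
\begin{equation*}
[\phi_\lambda]_s^2 = \iint_{\mathbb{R}^{2N}} \frac{|\phi_\lambda(x)-\phi_\lambda(y)|^2}{|x-y|^{N+2s}}\,dxdy
\end{equation*}
yields $[\phi_\lambda]_s^2 = \lambda^{2N/q - N + 2s}\,[\phi]_s^2$, since $dxdy = \lambda^{-2N}\,dudv$, $|x-y|^{-(N+2s)} = \lambda^{N+2s}|u-v|^{-(N+2s)}$, and each factor $\phi_\lambda$ contributes $\lambda^{N/q}$.

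The hypothesis $q < 2_s^\ast = 2N/(N-2s)$ is exactly equivalent to $2N/q > N - 2s$, i.e.\ the exponent $2N/q - N + 2s$ is strictly positive. Hence $[\phi_\lambda]_s^2 \to 0$ as $\lambda \to 0^+$, while $|\phi_\lambda|_q$ remains constant. Normalising by setting $\psi_\lambda := \phi_\lambda/|\phi|_q$ gives a family in $C_0^\infty(\mathbb{R}^N)$ with $|\psi_\lambda|_q = 1$ and $[\psi_\lambda]_s \to 0$, which proves the infimum is $0$.

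There is really no serious obstacle: the argument is a clean homogeneity calculation. The only point that requires a moment's care is to verify the exponent bookkeeping in the scaling of $[\phi_\lambda]_s^2$ and to recognise that $q < 2_s^\ast$ is precisely the condition that makes the scaling exponent have the ``right'' sign so that $\lambda \to 0^+$ drives the seminorm to zero rather than to infinity. The non-negativity of the infimum is automatic.
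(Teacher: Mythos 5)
Your proof is correct. The paper does not actually prove this proposition --- it simply cites it from \cite[Theorem 3.2]{zhang3} --- and your dilation argument is precisely the standard proof of that cited result: the exponent bookkeeping $[\phi_\lambda]_s^2=\lambda^{2N/q-N+2s}[\phi]_s^2$ with $|\phi_\lambda|_q=|\phi|_q$ is right, and $2N/q-N+2s>0$ is indeed equivalent to $q<2_s^\ast$ (given $N>2s$), so letting $\lambda\to 0^+$ does the job. (As a minor remark, your argument in fact works for all $q\in[2,2_s^\ast)$; the lower restriction $q>2$ in the statement is not needed for this scaling computation.)
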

By Proposition~\ref{pro4.1},   one can choose $\phi_\zeta \in
C_0^\infty (\mathbb{R}^N)$ with $|\phi_\zeta|_q = 1$ and
supp\,$\phi_\zeta \subset B_{r_\zeta} (0)$ so that
\begin{displaymath}
\iint_{\mathbb{R}^{2N}}\frac{|\phi_\zeta(x)-\phi_\zeta(y)|^2}{|x-y|^{N+2s}}dxdy
\leq C\zeta^{\frac{2N-(N-2s)q}{q}},
\end{displaymath}
for any $1 > \zeta > 0$.\\
Set
\begin{equation}\label{e5.2}
\psi_\zeta(x) = e^{iA(0)x}\phi_\zeta(x), \quad
\psi_{\varepsilon,\zeta}(x) = \psi_\zeta(\varepsilon^{-\tau}x)
\end{equation}
and
\begin{equation}\label{e5.3}
\tau := \frac{2s2_s^{\ast}}{N(2_s^{\ast}-4\sigma)}.
\end{equation}

By $(f_3)$, for any $t>0$ we get
\begin{eqnarray*}
J_\varepsilon(t\psi_{\varepsilon,\zeta})
&\leq&\frac{C_0}{2}t^{2\sigma}\left(\iint_{\mathbb{R}^{2N}}\frac{|\psi_{\varepsilon,\zeta}(x)-e^{i(x-y)\cdot
A(\frac{x+y}{2})}\psi_{\varepsilon,\zeta}(y)|^2}{|x-y|^{N+2s}}dx dy\right)^{2\sigma} \\
&& \mbox{} + \frac{t^2}{2}\varepsilon^{-2s}
\int_{\mathbb{R}^N}V(x)|\psi_{\varepsilon,\zeta}|^2dx  -
t^{2_s^\ast}\frac{\varepsilon^{-2s}}{2_s^\ast}\int_{\mathbb{R}^N}
|u|^{2_s^\ast}dx\\
&\leq&
\varepsilon^{N\tau-2s}\left[\frac{C_0}{2}t^{2\sigma}\left(\iint_{\mathbb{R}^{2N}}\frac{|\psi_{\zeta}(x)-e^{i(\varepsilon^{\tau}x-\varepsilon^{\tau}y)\cdot
A(\frac{\varepsilon^{\tau} x+\varepsilon^{\tau}y}{2})}\psi_{\zeta}(y)|^2}{|x-y|^{N+2s}}dx dy\right)^{2\sigma} \right.\\
&& \mbox{} \left. +
\frac{t^2}{2}\int_{\mathbb{R}^N}V\left(\varepsilon^{\tau}
x\right)|\psi_{\zeta}|^2dx-
 \frac{t^{2_s^\ast}}{2_s^\ast}\int_{\mathbb{R}^N} |\psi_{\zeta}|^{2_s^\ast}dx\right]\\
&=& \varepsilon^{\frac{8s\sigma}{2_s^{\ast}-4\sigma}}
I_\varepsilon(t\psi_{\zeta}),
\end{eqnarray*}
where $I_\varepsilon \in C^1(E, \mathbb{R})$ is defined by
\begin{eqnarray*}
I_\varepsilon(u) &:=&
\frac{C_0}{2}\left(\iint_{\mathbb{R}^{2N}}\frac{|u(x)-e^{i(\varepsilon^{\tau}x-\varepsilon^{\tau}y)\cdot
A(\frac{\varepsilon^{\tau}x+\varepsilon^{\tau}
y}{2})}u(y)|^2}{|x-y|^{N+2s}}dx
dy\right)^{2\sigma} \\
&& \mbox{} + \frac{1}{2}\int_{\mathbb{R}^N}V\left(\varepsilon^{\tau}
x\right)|u|^2dx - \frac{1}{2_s^\ast}\int_{\mathbb{R}^N}
|u|^{2_s^\ast}dx.
\end{eqnarray*}
Since $2_s^\ast > 2\sigma$,  there exists a finite number $t_0 \in
[0, +\infty)$ such that
\begin{eqnarray*}
\max_{t \geq 0} I_\varepsilon(t\psi_{\zeta}) &=&
\frac{C_0}{2}t_0^{2\sigma}\left(\iint_{\mathbb{R}^{2N}}\frac{|\psi_{\zeta}(x)-e^{i(\varepsilon^{\tau}x-\varepsilon^{\tau}y)\cdot
A(\frac{\varepsilon^{\tau}x+\varepsilon^{\tau}
y}{2})}\psi_{\zeta}(y)|^2}{|x-y|^{N+2s}}dx
dy\right)^{2\sigma}  \\
&& \mbox{} +
\frac{t_0^2}{2}\int_{\mathbb{R}^N}V\left(\varepsilon^{\tau}
x\right)|\psi_{\zeta}|^2dx-
\frac{t_0^{2_s^\ast}}{2_s^\ast}\int_{\mathbb{R}^N} |\psi_{\zeta}|^{2_s^\ast}dx\\
&\leq&
\frac{C_0}{2}t_0^{2\sigma}\left(\iint_{\mathbb{R}^{2N}}\frac{|\psi_{\zeta}(x)-e^{i(\varepsilon^{\tau}x-\varepsilon^{\tau}y)\cdot
A(\frac{\varepsilon^{\tau}x+\varepsilon^{\tau}
y}{2})}\psi_{\zeta}(y)|^2}{|x-y|^{N+2s}}dx
dy\right)^{2\sigma} \\
&& \mbox{} +
\frac{t_0^2}{2}\int_{\mathbb{R}^N}V\left(\varepsilon^{\tau}
x\right)|\psi_\zeta|^2dx.
\end{eqnarray*}
Let $\psi_\zeta(x) = e^{iA(0)x}\phi_\zeta(x)$, where $\phi_\zeta(x)$
is as defined above. Then we have the following lemma.
\begin{lemma}{\rm (see \cite[Lemma 3.6]{zhang3})}\label{lemma5.3}(Norm estimate)   For any $\zeta > 0$ there exists $\varepsilon_0 = \varepsilon_0(\zeta) >
0$ such that
\begin{eqnarray*}
\iint_{\mathbb{R}^{2N}}\frac{|\psi_{\zeta}(x)-e^{i(\varepsilon^{\tau}x-\varepsilon^{\tau}y)\cdot
A(\frac{\varepsilon^{\tau}x+\varepsilon^{\tau}
y}{2})}\psi_{\zeta}(y)|^2}{|x-y|^{N+2s}}dx dy \leq
C\zeta^{\frac{2N-(N-2s)q}{q}} + \frac{1}{1-s}\zeta^{2s} +
\frac{4}{s}\zeta^{2s},
\end{eqnarray*}
for all $0 < \varepsilon < \varepsilon_0$ and some constant $C > 0$
depending only on $[\phi]_{s,0}$.
\end{lemma}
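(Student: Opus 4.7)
The strategy is to peel off the leading-order magnetic phase from $e^{ia_\varepsilon(x,y)}$, where $a_\varepsilon(x,y):=\varepsilon^\tau(x-y)\cdot A(\varepsilon^\tau(x+y)/2)$, so that what remains is the ordinary Gagliardo seminorm of $\phi_\zeta$ (already controlled by construction) plus a perturbative magnetic correction that is small once $\varepsilon$ is chosen small depending on $\zeta$. The algebraic setup exploits that $\psi_\zeta(y)=e^{iA(0)\cdot y}\phi_\zeta(y)$, so continuity of $A$ at the origin will govern the size of the magnetic perturbation on the support of $\phi_\zeta$.

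First I would insert the intermediate term $e^{iA(0)\cdot(x-y)}\psi_\zeta(y)$ into the difference. The resulting identity
\[
\psi_\zeta(x) - e^{iA(0)\cdot(x-y)}\psi_\zeta(y) = e^{iA(0)\cdot x}\bigl[\phi_\zeta(x)-\phi_\zeta(y)\bigr]
\]
has modulus $|\phi_\zeta(x)-\phi_\zeta(y)|$, and combined with $|a+b|^2\le 2|a|^2+2|b|^2$ it yields
\[
|\psi_\zeta(x) - e^{ia_\varepsilon(x,y)}\psi_\zeta(y)|^2 \le 2|\phi_\zeta(x)-\phi_\zeta(y)|^2 + 2|\phi_\zeta(y)|^2\,|1-e^{iG_\varepsilon(x,y)}|^2,
\]
with $G_\varepsilon(x,y):=a_\varepsilon(x,y)-A(0)\cdot(x-y)$. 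Integrated against $|x-y|^{-N-2s}$, the first term is dominated by $2C\zeta^{(2N-(N-2s)q)/q}$ by the construction of $\phi_\zeta$, accounting for the first summand of the claimed bound.

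For the remaining term I would split the $x$-integration at $|x-y|=1$ and exploit $|1-e^{i\theta}|\le\min\{|\theta|,2\}$. On the far region $|x-y|\ge 1$, the bound $|1-e^{iG_\varepsilon}|^2\le 4$ together with $\int_1^\infty r^{-1-2s}dr = 1/(2s)$, combined with the $y$-integration against $|\phi_\zeta|^2$, produces the $\tfrac{4}{s}\zeta^{2s}$ contribution. On the near region $|x-y|\le 1$ I would use $|1-e^{iG_\varepsilon}|\le |G_\varepsilon|\le |x-y|\cdot|\varepsilon^\tau A(\varepsilon^\tau(x+y)/2) - A(0)|$; then $\int_0^1 r^{1-2s}dr = 1/(2-2s)$ gives the $\tfrac{1}{1-s}\zeta^{2s}$ contribution, provided the scalar factor $|\varepsilon^\tau A(\varepsilon^\tau(x+y)/2)-A(0)|$ is kept uniformly small on $\mathrm{supp}\,\phi_\zeta$; this is what fixes the threshold $\varepsilon_0(\zeta)$.

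The main technical obstacle is matching the precise $\zeta^{2s}$ weights in the perturbative summands: this requires coupling the specific scaling of $\phi_\zeta$ underlying Proposition~\ref{pro4.1} (whose support widens as $\zeta\to 0$) with the continuity of $A$ at the origin and the factor $\varepsilon^\tau\to 0$, so that the magnetic potential is as close to $A(0)$ as needed across the entire support of $\psi_\zeta$. Once this quantitative control is in place, the remaining steps are standard manipulations with the magnetic difference quotient.
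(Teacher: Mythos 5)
The paper itself contains no proof of this lemma (it is imported verbatim from \cite[Lemma 3.6]{zhang3}), and your decomposition --- inserting the reference phase $e^{iA(0)\cdot(x-y)}$ so that the leading term reduces to $[\phi_\zeta]_s^2$, then treating the phase discrepancy via $|1-e^{i\theta}|\le\min\{|\theta|,2\}$ with a near/far splitting of $|x-y|$ --- is exactly the standard route there. However, two of your steps fail as written. For the far region: once you bound $|1-e^{iG_\varepsilon}|^2\le 4$ on $\{|x-y|\ge 1\}$, all dependence on $\varepsilon$ is discarded and what remains is a fixed multiple of $\|\phi_\zeta\|_2^2\int_{|z|\ge 1}|z|^{-N-2s}\,dz$. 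The functions realizing Proposition~\ref{pro4.1} are of the form $\phi_\zeta(x)=\zeta^{N/q}\phi(\zeta x)$, for which $\|\phi_\zeta\|_2^2=\zeta^{2N/q-N}\|\phi\|_2^2\to\infty$ as $\zeta\to 0$ (because $q>2$) and $\mathrm{supp}\,\phi_\zeta\subset B_{r/\zeta}(0)$. Hence your far-region contribution is not $\tfrac{4}{s}\zeta^{2s}$; it is not even bounded in $\zeta$, and no choice of $\varepsilon_0(\zeta)$ can repair it, since $\varepsilon$ no longer appears. One must split at a $\zeta$-dependent radius $R(\zeta)$ chosen so that $\|\phi_\zeta\|_2^2\,R(\zeta)^{-2s}\lesssim \zeta^{2s}$, and only afterwards shrink $\varepsilon_0$ so that the near-region smallness beats both $\|\phi_\zeta\|_2^2R(\zeta)^{2-2s}$ and the growth of the support. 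You flag this ``weight matching'' as a technical obstacle, but it is the actual content of the lemma, and your fixed split at $|x-y|=1$ cannot produce it.

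The second, more serious, problem is that the smallness you invoke in the near region is false: $\varepsilon^\tau A(\varepsilon^\tau(x+y)/2)-A(0)\to -A(0)$ as $\varepsilon\to 0$ (the first term tends to $0$, not to $A(0)$), so continuity of $A$ at the origin does not make $G_\varepsilon(x,y)=(x-y)\cdot\bigl[\varepsilon^\tau A(\varepsilon^\tau(x+y)/2)-A(0)\bigr]$ small unless $A(0)=0$. The culprit is the extra factor $\varepsilon^\tau$ multiplying $(x-y)$ in the exponent of the statement, which makes $e^{iA(0)\cdot(x-y)}$ the wrong comparison phase. In the cited source the phase is $(x-y)\cdot A(\varepsilon(x+y)/2)$, for which the discrepancy $(x-y)\cdot[A(\varepsilon(x+y)/2)-A(0)]$ does vanish uniformly for $\tfrac{x+y}{2}$ in the relevant ball as $\varepsilon\to 0$, and your Lipschitz bound then applies. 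With the phase exactly as printed here, the only discrepancy that vanishes is the one relative to the zero phase, but then the leading term becomes $[\psi_\zeta]_s^2$ rather than $[\phi_\zeta]_s^2$, which the construction does not control. Either way, your near-region estimate rests on a limit that does not hold, and the argument needs to be redone with the correct comparison phase (most naturally, by proving the statement in the form of \cite[Lemma 3.6]{zhang3}).
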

On the one hand, since $V(0) = 0$ and note that supp\,$\phi_\zeta
\subset B_{r_\zeta}(0)$, there is $\varepsilon^\ast > 0$ such that
\begin{displaymath}
V\left(\varepsilon^{\tau} x\right) \leq
\frac{\zeta}{|\phi_\zeta|_2^2}\quad \mbox{for\ all }\ |x| \leq
r_\zeta\ \mbox{and}\ 0 < \varepsilon < \varepsilon^\ast.
\end{displaymath}
This implies that
\begin{equation}\label{e5.5}
\max_{t\geq 0}I_\varepsilon(t\phi_\delta) \leq
\frac{C_0}{2}t_0^{2\sigma}\left(C\zeta^{\frac{2N-(N-2s)q}{q}} +
\frac{1}{1-s}\zeta^{2s} + \frac{4}{s}\zeta^{2s}\right)^{2\sigma} +
\frac{t_0^2}{2}\zeta.
\end{equation}
Therefore, for all $0 < \varepsilon <
\min\{\varepsilon_0,\varepsilon^\ast\}$, we have
\begin{equation}\label{e5.6}
\max_{t\geq 0} J_\varepsilon(t\psi_{\lambda,\zeta}) \leq
\left[\frac{C_0}{2}t_0^{2\sigma}\left(C\zeta^{\frac{2N-(N-2s)q}{q}}
+ \frac{1}{1-s}\zeta^{2s} + \frac{4}{s}\zeta^{2s}\right)^{2\sigma} +
\frac{t_0^2}{2}\zeta\right]\varepsilon^{\frac{8s\sigma}{2_s^{\ast}-4\sigma}}.
\end{equation}
\indent Thus we have the following result.
\begin{lemma}\label{lemma4.3} Under the assumptions of Lemma \ref{lemma4.1},
for any $\kappa > 0$ there exists $\mathcal {E}_\kappa > 0$ such
that for each $0 < \varepsilon < \mathcal {E}_\kappa$, there is
$\widehat{e}_\varepsilon \in E$ with $\|\widehat{e}_\varepsilon\| >
\varrho_\varepsilon$, $J_\varepsilon(\widehat{e}_\varepsilon) \leq
0$ and
\begin{equation}\label{e5.7}
\max_{t\in [0, 1]} J_\varepsilon(t\widehat{e}_\varepsilon) \leq
\kappa\varepsilon^{\frac{8s\sigma}{2_s^{\ast}-4\sigma}}.
\end{equation}
\end{lemma}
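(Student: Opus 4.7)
My proof plan for Lemma~\ref{lemma4.3} is to exploit the upper bound \eqref{e5.6} already obtained for $\max_{t \geq 0} J_\varepsilon(t \psi_{\varepsilon, \zeta})$ and to choose the concentration parameter $\zeta$ small enough (depending only on $\kappa$) so that the $\varepsilon$-independent prefactor is at most $\kappa$. Define
$$B(\zeta) := \frac{C_0}{2}t_0^{2\sigma}\Bigl(C\zeta^{\frac{2N-(N-2s)q}{q}} + \frac{1}{1-s}\zeta^{2s} + \frac{4}{s}\zeta^{2s}\Bigr)^{2\sigma} + \frac{t_0^2}{2}\zeta.$$
Since $q \in (2, 2_s^{\ast})$ forces $\frac{2N-(N-2s)q}{q} > 0$, and obviously $2s > 0$, every power of $\zeta$ in $B(\zeta)$ is positive, so $B(\zeta) \to 0^+$ as $\zeta \to 0^+$. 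Given $\kappa > 0$, I pick and fix $\zeta = \zeta_\kappa \in (0, 1)$ with $B(\zeta_\kappa) \leq \kappa$.

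With $\zeta_\kappa$ frozen, I set $\mathcal{E}_\kappa := \min\{\varepsilon_0(\zeta_\kappa),\, \varepsilon^{\ast}(\zeta_\kappa)\}$, where $\varepsilon_0$ is the threshold produced by Lemma~\ref{lemma5.3} and $\varepsilon^{\ast}$ arises from the pointwise smallness of $V(\varepsilon^{\tau}\cdot)$ on $\mathrm{supp}\,\phi_{\zeta_\kappa}$ (available because $V(0)=0$ and $V$ is continuous). For every $\varepsilon \in (0, \mathcal{E}_\kappa)$, estimate \eqref{e5.6} is in force with its bracket bounded by $\kappa$. Next, I invoke the argument of Lemma~\ref{lemma4.1}(ii) along the explicit ray $t \mapsto t\psi_{\varepsilon, \zeta_\kappa}$: the growth condition $2_s^{\ast} > 2\sigma > 2$ combined with \eqref{e5.1} and $(f_3)$ implies $J_\varepsilon(t\psi_{\varepsilon, \zeta_\kappa}) \to -\infty$ as $t \to \infty$, so there exists some $t_\varepsilon > 0$ (possibly depending on $\varepsilon$) such that simultaneously $\|t_\varepsilon \psi_{\varepsilon, \zeta_\kappa}\|_\varepsilon > \varrho_\varepsilon$ and $J_\varepsilon(t_\varepsilon \psi_{\varepsilon, \zeta_\kappa}) \leq 0$. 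I then set $\widehat{e}_\varepsilon := t_\varepsilon \psi_{\varepsilon, \zeta_\kappa}$.

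For the final estimate I just use that $t\widehat{e}_\varepsilon = (tt_\varepsilon) \psi_{\varepsilon, \zeta_\kappa}$ traces a subset of the full ray $\{s\psi_{\varepsilon, \zeta_\kappa} : s \geq 0\}$, so
$$\max_{t \in [0, 1]} J_\varepsilon(t\widehat{e}_\varepsilon) \;\leq\; \max_{s \geq 0} J_\varepsilon(s \psi_{\varepsilon, \zeta_\kappa}) \;\leq\; B(\zeta_\kappa)\,\varepsilon^{\frac{8s\sigma}{2_s^{\ast}-4\sigma}} \;\leq\; \kappa\,\varepsilon^{\frac{8s\sigma}{2_s^{\ast}-4\sigma}},$$
which is exactly \eqref{e5.7}.

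The only delicate point I anticipate is ensuring that the parameter thresholds $\varepsilon_0(\zeta_\kappa)$ and $\varepsilon^{\ast}(\zeta_\kappa)$ obtained for the fixed concentration scale $\zeta_\kappa$ do not conflict with the need to take $\varepsilon$ arbitrarily small later in the semiclassical analysis; this is resolved because both $\zeta_\kappa$ and the corresponding thresholds depend only on $\kappa$ (through $B(\zeta_\kappa) \leq \kappa$) and not on $\varepsilon$, so shrinking $\varepsilon$ further only strengthens \eqref{e5.6}. The existence of $t_\varepsilon$ and the bound $\|\widehat{e}_\varepsilon\|_\varepsilon > \varrho_\varepsilon$ are then automatic from the mountain pass geometry of Lemma~\ref{lemma4.1}, so no further compactness argument is needed here.
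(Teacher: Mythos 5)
Your proposal is correct and follows essentially the same route as the paper's own proof: fix $\zeta=\zeta_\kappa$ small enough that the $\varepsilon$-independent bracket in \eqref{e5.6} is at most $\kappa$, set $\mathcal{E}_\kappa=\min\{\varepsilon_0,\varepsilon^\ast\}$, take $\widehat{e}_\varepsilon=\widehat{t}_\varepsilon\psi_{\varepsilon,\zeta_\kappa}$ with $\widehat{t}_\varepsilon$ large enough to leave the mountain pass ball and make $J_\varepsilon$ nonpositive, and bound the segment $\{t\widehat{e}_\varepsilon: t\in[0,1]\}$ by the maximum over the whole ray. The extra care you take in noting that $\frac{2N-(N-2s)q}{q}>0$ and that $\zeta_\kappa$ and the thresholds depend only on $\kappa$ is consistent with, and slightly more explicit than, the paper's argument.
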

\begin{proof}
Choose $\zeta > 0$ so small that
\begin{displaymath}
\frac{C_0}{2}t_0^{2\sigma}\left(C\zeta^{\frac{2N-(N-2s)q}{q}} +
\frac{1}{1-s}\zeta^{2s} + \frac{4}{s}\zeta^{2s}\right)^{2\sigma} +
\frac{t_0^2}{2}\zeta \leq \kappa.
\end{displaymath}
Let $\psi_{\varepsilon,\zeta} \in E$ be the function defined by
\eqref{e5.3}. Set $\mathcal {E}_\kappa =
\min\{\varepsilon_0,\varepsilon^\ast\}$. Let
$\widehat{t}_\varepsilon > 0$ be such that
$\widehat{t}_\varepsilon\|\psi_{\varepsilon,\zeta}\|_\varepsilon
> \varrho_\varepsilon$ and $J_\varepsilon(t\psi_{\varepsilon,\zeta}) \leq 0$ for
all $t \geq \widehat{t}_\varepsilon$. By \eqref{e5.6}, let
$\widehat{e}_\varepsilon = \widehat{t}_\varepsilon
\psi_{\varepsilon,\zeta}$ we know that the conclusion of Lemma
\ref{lemma4.3} holds.
\end{proof}

\noindent{\bf Proof of Theorem~\ref{the3.1}.} For any $0 < \kappa <
\sigma_0$, by Lemma \ref{lemma3.4}, we choose $\mathcal {E}_\kappa >
0$ and define for $0 < \varepsilon < \mathcal {E}_\kappa$, the
minimax value
$$c_\varepsilon := \inf_{\gamma \in \Gamma_\varepsilon}\max_{t\in [0,1]} J_\varepsilon(t\widehat{e}_\varepsilon),$$
where $$\Gamma_\varepsilon := \{\gamma \in C([0, 1], E): \gamma(0) =
0 \ \mbox{and}\ \gamma(1) = \widehat{e}_\varepsilon\}.$$ By Lemma
\ref{lemma4.1}, we have $\alpha_\varepsilon \leq c_\varepsilon \leq
\kappa\varepsilon^{\frac{8s\sigma}{2_s^{\ast}-4\sigma}}$. By virtue
of Lemma \ref{lemma3.4}, we know that $J_\varepsilon$ satisfies the
$(PS)_{c_\lambda}$ condition, there is $u_\varepsilon \in E$ such
that $J'_\varepsilon(u_\varepsilon) = 0$ and
$J_\varepsilon(u_\varepsilon) = c_\varepsilon$, Then $u_\varepsilon$
is a nontrivial mountain pass solution  of problem  \eqref{e3.1}.

 Since $u_\varepsilon$ is a critical point of
$J_\varepsilon$, by $(M)$ and $(H)$, we have for $\tau \in [2\sigma,
2_s^\ast]$
\begin{eqnarray}\label{e5.11}
\kappa\varepsilon^{\frac{8s\sigma}{2_s^{\ast}-4\sigma}}
&\geq&\nonumber J_\varepsilon (u_\varepsilon) =
J_\varepsilon(u_\varepsilon) - \frac{1}{\tau}\langle
J_\varepsilon'(u_\varepsilon), u_\varepsilon\rangle
\\
&=&\nonumber \frac12
\widetilde{M}\left([u_\varepsilon]_{s,A_\varepsilon}^2\right)-
\frac{1}{\tau}
M\left([u_\varepsilon]_{s,A_\varepsilon}^2\right)[u_\varepsilon]_{s,A_\varepsilon}^2+\left(\frac{1}{2}-\frac{1}{\tau}\right)\varepsilon^{-2s}
\int_{\mathbb{R}^N}V(x)|u_\varepsilon|^2dx \\
&&\nonumber \mbox{}+
\left(\frac{1}{\tau}-\frac{1}{2_s^\ast}\right)\varepsilon^{-2s}\int_{\mathbb{R}^N}
|u_\varepsilon|^{2_s^\ast}dx +
\varepsilon^{-2s}\int_{\mathbb{R}^N}(\mathcal{K}_\alpha*F(|u_\varepsilon|^2))\left(\frac{1}{\tau}f(|u_\varepsilon|^2)|u_\varepsilon|^2-\frac{1}{4}F(|u_\varepsilon|^2)\right)dx\\
&\geq&\nonumber
\left(\frac{1}{2\sigma}-\frac{1}{\tau}\right)m_1[u_\varepsilon]_{s,A_\varepsilon}^{2\sigma}
+ \left(\frac{1}{2}-\frac{1}{\tau}\right)\varepsilon^{-2s}
\int_{\mathbb{R}^N}V(x)|u_\varepsilon|^2dx\\
&& \mbox{}+
\left(\frac{1}{\tau}-\frac{1}{2_s^\ast}\right)\varepsilon^{-2s}\int_{\mathbb{R}^N}
|u_\varepsilon|^{2_s^\ast}dx +
\left(\frac{\mu}{\tau}-\frac{1}{4}\right)\varepsilon^{-2s}
\iint_{\mathbb{R}^{2N}}\frac{F(|u_\varepsilon(x)|^2)F(|u_\varepsilon(y)|^2)}{|x-y|^{\alpha}}dxdy.
\end{eqnarray}
Taking $\tau = 2/\sigma$, we obtain the estimate $\eqref{e1.8}$ and
taking $\tau = \mu$ we obtain the estimate $\eqref{e1.9}$. This
completes the proof of Theorem~\ref{the3.1}. $\hfill\Box$

Next, we shall establish Theorem~\ref{the3.2}. Again, we shall first need to prove a lemma.

For any $m^{\ast} \in \mathbb{N}$, one can choose $m^{\ast}$
functions $\phi_\zeta^i \in C_0^\infty(\mathbb{R}^N)$ such that
supp\,$\phi_\zeta^i$ $ \cap$ supp\,$\phi_\zeta^k = \emptyset$, $i
\neq k$, $|\phi_\zeta^i|_s = 1$ and
\begin{displaymath}
\iint_{\mathbb{R}^{2N}}\frac{|\phi_\zeta^i(x)-\phi_\zeta^i(y)|^2}{|x-y|^{N+2s}}dxdy
\leq C\zeta^{\frac{2N-(N-2s)q}{q}}.
\end{displaymath}
Let $r_\zeta^{m^{\ast}}
> 0$ be such that supp\,$\phi_\zeta^{i} \subset B_{r_\zeta}^{i}(0)$
for $i = 1,2,\cdots,m^{\ast}$. Set
\begin{equation}\label{e5.8}
\psi_\zeta^i(x) = e^{iA(0)x}\phi_\zeta^i(x)
\end{equation}
and
\begin{equation}\label{e5.9}
\psi_{\varepsilon,\zeta}^i(x) = \psi_\zeta^i(\varepsilon^{-1}x).
\end{equation}
Denote
\begin{displaymath}
\mathcal{H}_{\varepsilon\zeta}^{m^{\ast}} =
\mbox{span}\{\psi_{\varepsilon,\zeta}^1, \psi_{\varepsilon,\zeta}^2,
\cdots, \psi_{\varepsilon,\zeta}^{m^{\ast}}\}.
\end{displaymath}
Observe that for each $u = \displaystyle\sum_{i=1}^{m^{\ast}}c_i
\psi_{\varepsilon,\zeta}^i \in \mathcal{H}_{\varepsilon\zeta}^{m^{\ast}}$, we
have
\begin{displaymath}
[u]_{s,A_\varepsilon}^2 \leq
C\displaystyle\sum_{i=1}^{m^{\ast}}|c_i|^2[\psi_{\varepsilon,\zeta}^i]_{s,A_\varepsilon}^2,
\end{displaymath}
for some constant $C > 0$. Therefore
\begin{displaymath}
J_\varepsilon(u) \leq  C\sum_{i=1}^{m^{\ast}}J_{\varepsilon}(c_i
\psi_{\varepsilon,\zeta}^i)
\end{displaymath}
for some constant $C > 0$. Based on a similar argument as before, we
see that
\begin{displaymath}
J_\varepsilon(c_i \psi_{\varepsilon,\zeta}^i) \leq \varepsilon^{N -
2s}\Psi(|c_i|\psi_{\zeta}^i).
\end{displaymath}
As before, we can obtain the following estimate:
\begin{equation}\label{e5.10}
\max_{u\in \mathcal{H}_{\varepsilon\delta}^{m^{\ast}}} J_\varepsilon(u) \leq C
m^\ast
\left[\frac{C_0}{2}t_0^{2\sigma}\left(C\zeta^{\frac{2N-(N-2s)q}{q}}
+ \frac{1}{1-s}\zeta^{2s} + \frac{4}{s}\zeta^{2s}\right)^{2\sigma} +
\frac{t_0^2}{2}\zeta\right]\varepsilon^{\frac{8s\sigma}{2_s^{\ast}-4\sigma}}
\end{equation}
for all small enough $\zeta$ and some constant $C > 0$. From the estimate \eqref{e5.10} we have the following:
\begin{lemma}\label{lemma4.4} Under the assumptions of Lemma \ref{lemma4.1},
for any $m^{\ast} \in \mathbb{N}$ and $\kappa > 0$ there exists
$\mathcal {E}_{m^{\ast}\kappa} > 0$ such that for each $0 <
\varepsilon < \mathcal {E}_{m^{\ast}\kappa}$, there exists an
$m^{\ast}$-dimensional subspace $\mathcal{F}_{\lambda m^{\ast}}$ satisfying
\begin{displaymath}
\max_{u\in \mathcal{F}_{\lambda m^{\ast}}} J_\varepsilon(u) \leq
\kappa\varepsilon^{\frac{8s\sigma}{2_s^{\ast}-4\sigma}}.
\end{displaymath}
\end{lemma}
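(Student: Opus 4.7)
The plan is to take $\mathcal{F}_{\lambda m^{\ast}} := \mathcal{H}_{\varepsilon\zeta}^{m^{\ast}} = \mathrm{span}\{\psi_{\varepsilon,\zeta}^1,\ldots,\psi_{\varepsilon,\zeta}^{m^{\ast}}\}$ defined through \eqref{e5.8}--\eqref{e5.9}, where $\zeta > 0$ will be chosen small at the end. Since the functions $\phi_\zeta^i$ are supported in the pairwise disjoint balls $B_{r_\zeta}^i(0)$ and each satisfies $|\phi_\zeta^i|_q = 1$, they are linearly independent in $L^q(\mathbb{R}^N)$, hence the $\psi_{\varepsilon,\zeta}^i$ are linearly independent in $E$, giving $\dim \mathcal{F}_{\lambda m^{\ast}} = m^{\ast}$.

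Next, for $u = \sum_{i=1}^{m^{\ast}} c_i \psi_{\varepsilon,\zeta}^i$, I would exploit the disjoint supports to bound the functional termwise. The $L^2$-potential term, the critical term $\int |u|^{2_s^{\ast}}$, and the Choquard-type term all split as sums over $i$ because $F(|u|^2)$ and $|u|^{2_s^*}$ vanish on the disjoint pieces (and the Choquard double integral splits using the disjointness of supports and positivity). The magnetic Gagliardo seminorm $[u]_{s,A}^2$ satisfies $[u]_{s,A_{\varepsilon}}^2 \leq C \sum_{i=1}^{m^{\ast}} |c_i|^2 [\psi_{\varepsilon,\zeta}^i]_{s,A_{\varepsilon}}^2$, obtained by separating the diagonal contributions (on each support) from the off-diagonal cross contributions, which are controlled through the decay of the kernel $|x-y|^{-N-2s}$ on the separated supports. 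This yields $J_\varepsilon(u) \leq C \sum_{i=1}^{m^{\ast}} J_\varepsilon(c_i \psi_{\varepsilon,\zeta}^i)$.

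For each single term, I apply the same rescaling used in the proof of Theorem~\ref{the3.1}: the change of variables $x \mapsto \varepsilon^{-\tau} x$ with $\tau = 2s 2_s^{\ast}/(N(2_s^{\ast}-4\sigma))$ produces the factor $\varepsilon^{8s\sigma/(2_s^{\ast}-4\sigma)}$, while Lemma~\ref{lemma5.3} controls the magnetic Gagliardo seminorm of $\psi_\zeta^i$, and the hypothesis $V(0) = 0$ combined with $\mathrm{supp}\,\phi_\zeta^i \subset B_{r_\zeta}^i(0)$ guarantees $V(\varepsilon^{\tau} x) \leq \zeta/|\phi_\zeta^i|_2^2$ on the relevant support for $\varepsilon$ smaller than some $\varepsilon^{\ast}(\zeta)$. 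Summing over $i$ yields the estimate \eqref{e5.10}.

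Finally, given $\kappa>0$ and $m^{\ast}$, since the exponent $(2N-(N-2s)q)/q$ is positive (as $q < 2_s^{\ast}$) and $2s > 0$, one can choose $\zeta > 0$ so small that the bracketed factor in \eqref{e5.10} multiplied by $Cm^{\ast}$ does not exceed $\kappa$. Setting $\mathcal{E}_{m^{\ast}\kappa} := \min\{\varepsilon_0(\zeta), \varepsilon^{\ast}(\zeta)\}$ then gives the conclusion. The principal obstacle is the control of the magnetic Gagliardo cross terms between the disjoint supports, since the nonlocal kernel does not respect disjoint supports as the local Laplacian does; this is what forces the appearance of the extra constant $C$ in the sub-additivity bound, but that constant is harmless because it is ultimately absorbed by choosing $\zeta$ small.
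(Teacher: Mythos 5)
Your proposal is correct and follows essentially the same route as the paper: the same spanning set $\{\psi_{\varepsilon,\zeta}^i\}$ with disjointly supported $\phi_\zeta^i$, the same subadditivity bound $[u]_{s,A}^2\leq C\sum_i|c_i|^2[\psi_{\varepsilon,\zeta}^i]_{s,A}^2$ leading to $J_\varepsilon(u)\leq C\sum_i J_\varepsilon(c_i\psi_{\varepsilon,\zeta}^i)$, the same rescaling producing the factor $\varepsilon^{\frac{8s\sigma}{2_s^{\ast}-4\sigma}}$ via Lemma~\ref{lemma5.3} and the condition $V(0)=0$, and the same final choice of $\zeta$ small so that the bracket in \eqref{e5.10} times $Cm^{\ast}$ is at most $\kappa$. (Your use of $\varepsilon^{-\tau}$ in the rescaling, consistent with \eqref{e5.2}--\eqref{e5.3}, is in fact the intended reading of the paper's \eqref{e5.9}.)
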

\begin{proof}
Choose $\zeta > 0$ so small that
\begin{displaymath}
 C m^\ast
\left[\frac{C_0}{2}t_0^{2\sigma}\left(C\zeta^{\frac{2N-(N-2s)q}{q}}
+ \frac{1}{1-s}\zeta^{2s} + \frac{4}{s}\zeta^{2s}\right)^{2\sigma} +
\frac{t_0^2}{2}\zeta\right] \leq \kappa.
\end{displaymath}
Set $\mathcal{F}_{\varepsilon m^{\ast}} =
\mathcal{H}_{\varepsilon\zeta}^{m^{\ast}}=\mbox{span}\{\psi_{\varepsilon,\zeta}^1,
\psi_{\varepsilon,\zeta}^2, \cdots,
\psi_{\varepsilon,\zeta}^{m^{\ast}}\}$. Now the conclusion of Lemma \ref{lemma4.4} follows from \eqref{e5.10}.
\end{proof}

\vspace{2mm}

\noindent{\bf Proof of Theorem~\ref{the3.2}.}
 Denote the set of all symmetric
(in the sense that $-Z = Z$) and closed subsets of $E$ by $\Sigma$,
for each $Z \in \Sigma$. Let gen$(Z)$ be the Krasnoselski genus and
\begin{displaymath}
j(Z) := \min_{\eta\in
\Phi_{m^\ast}}\mbox{gen}(\eta(Z)\cap\partial
B_{\varrho_\varepsilon}),
\end{displaymath}
where $\Phi_{m^\ast}$ is the set of all odd homeomorphisms $\eta
\in C(E, E)$ and $\varrho_\varepsilon$ is the number from Lemma
\ref{lemma4.1}. Then $j$ is a version of Benci's pseudoindex
\cite{b1}. Let
\begin{displaymath}
c_{\varepsilon i} := \inf_{j(Z)\geq i}\sup_{u\in Z}J_\varepsilon(u),
\quad 1 \leq i \leq m^\ast.
\end{displaymath}
Since $J_\varepsilon(u) \geq \alpha_\varepsilon$ for all $u \in
\partial B_{\varrho_\varepsilon}^{+}$ and since $j(\mathcal{F}_{\varepsilon m^\ast}) =
\dim \mathcal{F}_{\varepsilon m^\ast} = m^\ast$, we obtain
\begin{displaymath}
\alpha_\varepsilon \leq c_{\varepsilon 1} \leq \cdots\leq
c_{\varepsilon m^\ast} \leq \sup_{u \in H_{\varepsilon m^\ast}}
J_\varepsilon(u) \leq
\kappa\varepsilon^{\frac{8s\sigma}{2_s^{\ast}-4\sigma}}.
\end{displaymath}
It follows from Lemma \ref{lemma3.4}  that $J_\varepsilon$ satisfies
the $(PS)_{c_\varepsilon}$ condition at all levels $c <
\sigma_0\varepsilon^{N-2s}$. By the usual critical point theory, all
$c_{\varepsilon i}$ are critical levels and $J_\varepsilon$ has at
least $m^\ast$ pairs of nontrivial critical points satisfying
\begin{displaymath}
\alpha_\varepsilon \leq J_\varepsilon(u_\varepsilon) \leq
\kappa\varepsilon^{\frac{8s\sigma}{2_s^{\ast}-4\sigma}}.
\end{displaymath}
Hence, problem \eqref{e3.1} has at least $m^\ast$ pairs of
solutions. Finally, as in the proof of Theorem~\ref{the3.1}, we see that
these solutions satisfy the estimates  $\eqref{e1.8}$ and
$\eqref{e1.9}$. $\hfill\Box$

\section*{Competing Interests}
The authors declare that this work does not represent any conflict of interest.

\section*{Acknowledgements}
We thank the referee for useful remarks.
The first author was supported by the Research Foundation during the 13th Five-Year Plan Period of Department of Education of Jilin Province (JJKH20170648KJ), Natural Science Foundation of
Changchun Normal University (No. 2017-09). 
The second author was supported by the Slovenian Research Agency
(No. P1-0292, N1-0114, N1-0083, N1-0064, and J1-8131).
The third author was  supported by the National Natural Science Foundation of China (No. 11871199)
and Heilongjiang Province
Postdoctoral Startup Foundation (LBH-Q18109).


\begin{thebibliography}{99}

\bibitem{ADM} C.O. Alves, J.M. do \'{O}, O.H. Miyagaki,
Concentration phenomena for fractional elliptic equations involving exponential critical growth,  Adv. Nonlinear Stud.
16 (2016) 843-861.

\bibitem{A} V. Ambrosio, Concentration phenomena for a fractional Choquard equation with magnetic field,
Dynam. Part. Differ. Eq.  16 (2019) 125-149.

\bibitem{ap} {D. Applebaum, L$\acute{\mbox{e}}$vy processes-from probalility to finance and quantum
groups, Notices Amer. Math. Soc. 51 (2004) 1336-1347.}

\bibitem{ar} {G. Arioli, A. Szulkin, A semilinear Schr\"{o}dinger equation in the
presence of a magnetic field, Arch. Rational Mech. Anal. 170 (2003)
277-295.}

\bibitem{au} { G. Autuori, A. Fiscella, P. Pucci, Stationary Kirchhoff problems
involving a fractional operator and a critical nonlinearity,
Nonlinear Anal. 125 (2015) 699-714.}

\bibitem{b1} {V. Benci, On critical point theory of indefinite functionals in the presence of symmetries, Trans.
Amer. Math. Soc. 274 (1982) 533-572.}

\bibitem{BMS}
Z. Binlin, G. Molica Bisci, R. Servadei, Superlinear nonlocal fractional problems with infinitely many solutions,
Nonlinearity 28 (2015) 2247-2264.

\bibitem{zhang3} Z. Binlin, M. Squassina, X. Zhang, Fractional NLS equations with
magnetic field, critical frequency and critical growth, Manuscripta
Math.  291 (2018) 1553-1546.

\bibitem{Bog} V.I. Bogachev, Measure Theory, vol. II, xiv+575 pp, Springer-Verlag, Berlin, 2007.

\bibitem{ci} {S. Cingolani, S. Secchi, Semiclassical limit for nonlinear
Schr\"{o}dinger equations with electromagnetic fileds, J. Math.
Anal. Appl. 275 (2002)  108-130.}

\bibitem{fel} {P. d'Avenia, M. Squassina, Ground states for fractional magnetic operators, ESAIM:
Control Optim. Calc. Var. 24 (2018) 1-24.}

\bibitem{dic} {J. Di Cosmo, J. Van Schaftingen, Semiclassical stationary states for
nonlinear Schr\"{o}dinger equations under a strong extenal magnetic
field, J. Differential Equations 259 (2015)  596-627.}

\bibitem{di} { E. Di Nezza, G. Palatucci, E. Valdinoci, Hitchhiker's guide to the fractional Sobolev spaces, Bull. Sci. Math. 136 (2012)
521-573.}

\bibitem{d3} {Y. Ding, F. Lin, Solutions of perturbed
Schr$\ddot{\mbox{o}}$dinger equations with critical nonlinearity,
Calc. Var. Partial Differential Equations 30 (2007) 231-249.}

\bibitem{fi} {A. Fiscella, E. Valdinoci, A critical Kirchhoff type problem
involving a nonlocal operator, Nonlinear Anal. 94 (2014) 156-170.}

\bibitem{I1} {T. Ichinose, Essential selfadjointness of the Weyl quantized
relativistic Hamiltonian, Ann. Inst. H. Poincare Phys. Theor. 51
(1989) 265-297.}

\bibitem{I3} {T. Ichinose, H. Tamura, Imaginary-time path integral for a relativistic spinless particle in an electromagnetic
field, Comm. Math. Phys. 105 (1986) 239-257. }

\bibitem{I4} {V. Iftimie, M. M$\check{a}$ntoiu, R. Purice, Magnetic pseudodifferential operators, Publ. Res. Inst. Math. Sci. 43
(2007) 585-623. }

\bibitem{ku} { K. Kurata, Existence and semi-classical limit of the least energy
solution to a nonlinear Schr$\ddot{\mbox{o}}$dinger equation with
electromagnetic fileds, Nonlinear Anal. 41 (2000)  763-778.}

\bibitem{liang3} {S. Liang, D. Repov\v{s}, B. Zhang,  On the fractional Schr\"{o}dinger-Kirchhoff equations with electromagnetic fields and critical nonlinearity, Comput. Math. Appl.  75 (2018) 1778-1794.}

\bibitem{liang2} {S. Liang, S. Shi,   Soliton solutions to Kirchhoff type problems involving the critical growth in $\mathbb{R}^N$, Nonlinear Anal.  81 (2013) 31-41.}

\bibitem{liang1} {S. Liang, J. Zhang,   Existence of solutions for Kirchhoff type problems with critical
nonlinearity in $\mathbb{R}^3$, Nonlinear Anal. Real World
Appl.  17 (2014) 126-136.}

\bibitem{Lu} D. L\"{u}, Existence and concentration behavior of ground state solutions for magnetic nonlinear Choquard equations,
 Commun. Pure Appl. Anal. 15 (2016) 1781-1795.

\bibitem{MPSZ} X. Mingqi, P. Pucci, M. Squassina, B. Zhang, Nonlocal Schr$\ddot{\mbox{o}}$dinger-Kirchhoff equations with external magnetic field,
Discrete Contin. Dyn. Syst. 37 (2017) 503-521.

 \bibitem{MRS}
G. Molica Bisci, V. R\u adulescu, R. Servadei, Variational methods for nonlocal
fractional equations,
Encyclopedia of Mathematics and its Applications, 162,
Cambridge University Press, Cambridge, 2016.

\bibitem{m3} {G. Molica Bisci, V. R\u adulescu, Ground state solutions of scalar field fractional for Schr\"{o}dinger equations,
Calc. Var. Partial Differential Equations  54 (2015) 2985-3008.}

\bibitem{m2} {G. Molica Bisci, D. Repov\v{s}, On doubly nonlocal fractional elliptic equations, Rend. Lincei Mat. Appl. 26 (2015) 161-176.}

\bibitem{PP}
G. Palatucci, A. Pisante, Improved Sobolev embeddings, profile decomposition, and concentration-compactness for fractional Sobolev spaces. Calc. Var. Partial Differential Equations 50 (2014) 799-829.

\bibitem{PXZ} P. Pucci, M. Xiang, B. Zhang, Multiple solutions for nonhomogeneous Schr\"{o}dinger-Kirchhoff type equations involving the fractional $p$-Laplacian in ${\mathbb {R}}^N$, Calc. Var. Partial Differetial Equations 54 (2015) 2785-2806.

\bibitem{PXZ1} P. Pucci, M. Xiang, B. Zhang, Existence and multiplicity of entire solutions for fractional $p$-Kirchhoff equations, Adv. Nonlinear Anal. 5 (2016) 27-55.

\bibitem{PXZ3} P. Pucci, M. Xiang, B. Zhang,
Existence results for Schr$\ddot{\mbox{o}}$dinger-Choquard-Kirchhoff equations involving the fractional $p$-Laplacian,
Adv. Calc. Var. 12 (2019) 253-275.

\bibitem{r1} { P.H. Rabinowitz, Minimax methods in critical point theory with applications to differential equations,
in: CBME Regional Conference Series in Mathematics, vol. 65, American Mathematical Society, Providence, RI, 1986.}

\bibitem{sq} {M. Squassina, Soliton dynamics for the nonlinear Schr$\ddot{\mbox{o}}$dinger
equation with magnetic field, Manuscripta Math. 130 (2009)
461-494.}

\bibitem{sq1} {M. Squassina, B. Volzone, Bourgain-Br\'{e}zis-Mironescu formula for
magnetic operators,  C. R. Math. 354 (2016) 825-831.}

\bibitem{WX} F. Wang, M. Xiang, Multiplicity of solutions to a nonlocal Choquard equation
involving fractional magnetic operators and critical exponent, Electron. J. Differ. Eq. 2016 (2016) 1-11.

\bibitem{w1} {M. Willem, Minimax Theorems, Birkh$\ddot{a}$ser,
Boston, 1996.}

\bibitem{XZR}{ M. Xiang, B. Zhang,  V. R\u{a}dulescu, Multiplicity of solutions for a class of quasilinear Kirchhoff
system involving the fractional $p$-Laplacian, Nonlinearity 29 (2016)
3186-3205.}

 \bibitem{XZR1} M. Xiang, B. Zhang, V. R\u adulescu, Superlinear Schr\"odinger-Kirchhoff type problems involving the fractional $p$-Laplacian and critical exponent, Adv. Nonlinear Anal. 9 (2020)  690-709.

\bibitem{XZZ} M. Xiang, B. Zhang, X. Zhang, A nonhomogeneous fractional $p$-Kirchhoff type problem  involving critical exponent in $\mathbb{R}^N$, Adv. Nonlinear Stud. 17 (2017) 611-640.

\bibitem{YW} J. Yang, F. Wu, Doubly critical problems involving fractional
Laplacians in $\mathbb{R}^N$,  Adv. Nonlinear Stud. 17 (2017) 677-690.

\bibitem{ZDM} J. Zhang, J.M. do \'{O}, M. Squassina,
Fractional Schr$\ddot{\mbox{o}}$dinger-Poisson systems with a general subcritical or critical nonlinearity,
Adv. Nonlinear Stud. 16 (2016) 15-30.

\bibitem{zhang2} {X. Zhang, B. Zhang, D. Repov\v{s}, Existence and symmetry of solutions for critical fractional
Schr$\ddot{\mbox{o}}$dinger equations with bounded potentials,
Nonlinear Anal. 142 (2016) 48-68.}

\bibitem{zhang1} { X. Zhang, B. Zhang, M. Xiang, Ground states for fractional
Schr$\ddot{\mbox{o}}$dinger equations involving a critical
nonlinearity, Adv. Nonlinear Anal.  5 (2016) 293-314.}
\end{thebibliography}
\end{document}